\newcommand\blfootnote[1]{%
  \begingroup
  \renewcommand\thefootnote{}\footnote{#1}%
  \addtocounter{footnote}{-1}%
  \endgroup
}
\title{Intermediate modular curves with infinitely many cubic points over $\mathbb{Q}$}
\author{Tarun  Dalal
\blfootnote{ \emph{Keywords}: modular curve, intermediate, cubic point.
\newline 2010 \emph{Mathematics Subject Classification}: Primary 11G18; Secondary 11G30, 14G05, 14H10, 14H25}}
\newcounter{teo}
\newtheorem{prop}[teo]{Proposition}
\newtheorem{lema}[teo]{Lemma}
\newtheorem{thm}[teo]{Theorem}
\theoremstyle{definition}
\theoremstyle{remark}
\numberwithin{equation}{section}
\newcommand{\Q}{\mathbb{Q}}
\newcommand{\N}{\mathbb{N}}
\newcommand{\Z}{\mathbb{Z}}
\newcommand{\SL}{\operatorname{SL}}
\newcommand{\End}{\operatorname{End}}
\newcommand{\pmat}[4]{ \begin{pmatrix} #1 & #2 \\ #3 & #4 \end{pmatrix}}
\newcommand{\psmat}[4]{\bigl( \begin{smallmatrix} #1 & #2 \\ #3 & #4 \end{smallmatrix} \bigr)}
\date{}
\begin{document}

\maketitle

\begin{abstract}
In this article, we determine all intermediate modular curves $X_\Delta(N)$ that admit infinitely many cubic points over the rational field $\mathbb{Q}$.
\end{abstract}

\section{Introduction}
Let $C$ be a non singular smooth curve of genus $g_C>1$ defined over a number field $K$.
 A point $P\in C$ is said to
be a point of degree $d$ over $K$  if $[K(P):K]=d.$
We consider the set of all points of degree $d$ on $C$ by
$\Gamma_d'(C,K)=\cup_{[L:K]=d}C(L)$, where $L\subseteq \overline{K}$
runs over the finite extensions of $K$ and $C(L)$ denotes the set of all $L$-rational points on $C$ (here we fix once and for all
$\overline{K}$, an algebraic closure of $K$). 
We say the curve $C$ has infinitely many points of degree $d$ over $K$ if and only if the set $\Gamma^\prime_d(C,K)$ is infinite.
By Faltings' theorem, we know that $C(L)$ is finite for any number field $L$.
Now it is natural to ask for $d\geq 2$, whether the set $\Gamma_d'(C,K)$ is finite or not. In this article, we are interested in investigating this question for intermediate modular curves lying between $X_1(N)$ and $X_0(N)$. We now set up some notations.

For any $N\in \N$, we define the groups
$$\Gamma_1(N):= \Big\{\pmat{a}{b}{c}{d}\in \SL_2(\Z): a\equiv d \equiv 1 \pmod N, \ c\equiv 0 \pmod N \Big\},$$
$$\Gamma_0(N):= \Big\{\pmat{a}{b}{c}{d}\in \SL_2(\Z):  c\equiv 0 \pmod N \Big\}.$$

For any subgroup $\Delta\subseteq (\Z/N \Z)^\times$, we define the group
$$\Gamma_\Delta(N):= \Big\{\pmat{a}{b}{c}{d}\in \SL_2(\Z): c\equiv 0 \pmod N, \ (a \ \mathrm{mod} \ N)\in \Delta \Big\}.$$
Let $X_1(N), X_0(N)$ and $X_\Delta(N)$ be the modular curves corresponding to the groups $\Gamma_1(N), \Gamma_0(N)$ and $\Gamma_\Delta(N)$ respectively. 
Since $-\mathrm{Id}$ acts as identity on the complex upper half plane, we have $X_{\Delta}(N)=X_{\langle \pm 1, \Delta \rangle}(N)$. Thus we will always assume that $-1\in \Delta$.
Observe that for any natural number $N$ and any subgroup $\Delta\subseteq (\Z/NZ)^\times$, 
the inclusions $\Gamma_1(N)\subseteq \Gamma_\Delta(N)\subseteq \Gamma_0(N)$ induces the natural $\Q$-rational mappings:
$X_1(N)\rightarrow X_{\Delta}(N)\rightarrow X_0(N).$

Furthermore, for $N \in \N$, the operator $w_N:=\psmat{0}{-1}{N}{0}$ act as an involution on $X_0(N)$. Therefore there is a degree $2$ mapping $X_0(N)\rightarrow X_0(N)/w_N=:X_0^+(N)$. Consequently, we get the mappings
$$X_1(N)\rightarrow X_{\Delta}(N)\rightarrow X_0(N)\rightarrow X_0^+(N).$$

It is well known that all these modular curves have models defined over $\Q$. For these modular curves, the set $\Gamma_d^\prime(\textendash,\Q)$ have been studied by many authors. For $d=2,3$ we know the values of $N$ for which the set $\Gamma_d^\prime(C,\Q)$ is infinite, where $C$ is one of the modular curves $X_1(N), X_0(N)$ and $X_0^+(N)$ (cf. \cite{KM95}, \cite{JKS04}, \cite{B99}, \cite{Jeo21}, \cite{Jeo18}, \cite{BD22}).

On the other hand, for the modular curves $X_{\Delta}(N)$, whether the set $\Gamma_d^\prime(X_{\Delta}(N), \Q)$ is finite or not is known only for the case $d=2$. In \cite{JKS20}, Jeon, Kim and Schweizer determined the values of $N$ and $\Delta$ for which the set $\Gamma_2^\prime(X_\Delta(N), \Q)$ is infinite. Now it is natural to ask for what values of $N$ and $\Delta$ the set $\Gamma_3^\prime(X_{\Delta}(N), \Q)$ is infinite.

In this article, we answer this question i.e. we determine the values of $N$ and $\Delta$ for which the set $\Gamma_3^\prime(X_\Delta(N), \Q)$ is infinite. 
Since $X_\Delta(N)=X_1(N)$ (resp., $X_\Delta(N)=X_0(N)$) when $\Delta=\{\pm 1\}$ (resp., $\Delta=(\Z/N\Z)^\times$), 
we always assume that $\{\pm 1\}\subsetneq \Delta \subsetneq (\Z/N\Z)^\times$. 

The Atkin-Lehner involutions play an important role in determining whether the sets $\Gamma_3^\prime(X_0(N), \Q)$ and $\Gamma_3^\prime(X_0^+(N), \Q)$ are finite or not. Unfortunately, the Atkin-Lehner operators may not define an involution on $X_{\Delta}(N)$ (moreover, they may not be defined over $\Q$) (cf. \cite[\S 1]{JKS20}). Thus the existing arguments for $X_1(N), X_0(N)$ and $X_0^+(N)$ are not sufficient to determine whether the set $\Gamma_3^+(X_{\Delta}(N), \Q)$ is finite or not. 
To deal with the curves $X_{\Delta}(N)$ for which the existing argument will not work, we use a result of Glenn Stevens (cf. \cite[Proposition 1.4]{Ste89}) and study the ramification of certain mapping.
Our main theorem of this article is the following:
\begin{thm}\label{MT}
The set $\Gamma^\prime_3(X_\Delta(N), \Q)$ is infinite if and only if $g_{X_\Delta(N)}\leq 1$ (cf. Table \ref{Gon leq 3 table} for the values of $N,\Delta$ with $g_{X_\Delta(N)}\leq 1$ ) or  $N$ and $\Delta$ are in the following list:
\begin{center}
\begin{tabular}{|c|c|c|}
\hline
$N$ & $\{\pm 1\} \subsetneq \Delta \subsetneq (\Z/N\Z)^\times$ &
$g_{X_\Delta(N)}$
 \\ \hline
  $24$ & $\Delta_1=\{\pm 1,\pm 5\}$ & $3$
 \\ \hline
 $24$ & $\Delta_2=\{\pm 1,\pm 7\}$ & $3$
 \\ \hline
   $26$ & $\Delta_1=\{\pm 1,\pm 5\}$ & $4$
 \\ \hline
 $26$ & $\Delta_2=\{\pm 1,\pm 3,\pm 9\}$ & $4$
 \\ \hline
  $28$ & $\Delta_1=\{\pm 1,\pm 13\}$ & $4$
 \\ \hline
 $28$ & $\Delta_2=\{\pm 1,\pm 3,\pm 9\}$ & $4$
 \\ \hline
 $29$ & $\Delta_2=\{\pm 1,\pm 4,\pm 5,\pm 6,\pm 7,\pm 9,\pm 13\}$ & $4$
 \\ \hline
  $36$ & $\Delta_2=\{\pm 1,\pm 11,\pm 13\}$ & $3$
 \\ \hline
  $37$ & $\Delta_3=\{\pm 1,\pm 6,\pm 8,\pm 10,\pm 11,\pm 14\}$ & $4$
 \\ \hline
 $37$ & $\Delta_4=\{\pm 1,\pm 3,\pm 4,\pm 7,\pm 9,\pm 10,\pm 11,\pm 12,\pm 16\} $ & $4$
 \\ \hline
  $49$ & $\Delta_2=\{\pm 1,\pm 6,\pm 8,\pm 13,\pm 15,\pm 20,\pm 22\}$ & $3$
 \\ \hline
 $50$ & $\Delta_2=\{\pm 1,\pm 9,\pm 11,\pm 19,\pm 21\}$ & $4$
 \\ \hline
\end{tabular}
\end{center}
 
\end{thm}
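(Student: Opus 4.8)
The plan is to combine the standard criterion for the infinitude of cubic points with a gonality bound that reduces the problem to finitely many pairs $(N,\Delta)$, and then to settle each remaining curve by a geometric case analysis. Recall (Faltings' theorem applied to $\operatorname{Sym}^3 X_\Delta(N)$, together with the analysis of positive-dimensional subvarieties of $W_3\subseteq\Jac$ carried out in the works cited in the introduction, e.g.\ \cite{Jeo21,BD22}) that for a curve $C/\Q$ with $g_C\geq 2$ the set $\Gamma_3'(C,\Q)$ is infinite precisely when one of the following holds: $C$ carries a $\Q$-rational base-point-free $g^1_3$ (equivalently, $C$ is trigonal over $\Q$); $C$ admits a degree-$3$ morphism over $\Q$ onto an elliptic curve $E$ with $\operatorname{rank}E(\Q)\geq 1$; or $g_C=3$ and $\operatorname{rank}\Jac(C)(\Q)\geq 1$ (this last because $W_3=\Jac(C)$ in genus $3$). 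For $g_{X_\Delta(N)}\leq 1$ the curve is $\mathbb P^1$ or an elliptic curve over $\Q$, since it always carries the rational cusp coming from $\infty\in X_1(N)$; the degree-$3$ map furnished by $|3\cdot(\text{cusp})|$ together with Hilbert irreducibility already produces infinitely many cubic points, which accounts for the rows of Table \ref{Gon leq 3 table}.

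First I would bound the level. Each of the first two alternatives forces the $\C$-gonality of $X_\Delta(N)$ to be at most $6$: a base-point-free $g^1_3$ gives $\C$-gonality $\leq 3$, while a degree-$3$ map to $E$ followed by a degree-$2$ map $E\to\mathbb P^1$ gives $\C$-gonality $\leq 6$. Applying Abramovich's lower bound $\operatorname{gon}_{\C}(X_\Gamma)\geq \tfrac{7}{800}\,[\operatorname{PSL}_2(\Z):\overline{\Gamma}]$ with $\Gamma=\Gamma_\Delta(N)$, any curve whose index exceeds $\tfrac{800}{7}\cdot 6$ has $\C$-gonality $>6$; for such a curve the genus is automatically $\geq 4$ (so the genus-$3$ clause is vacuous) and $\Gamma_3'(X_\Delta(N),\Q)$ is finite. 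Since the index grows with $N$ and with $[(\Z/N\Z)^\times:\Delta]$, this leaves only finitely many pairs $(N,\Delta)$, which I would enumerate together with their genera from the standard index and genus formulas for $\Gamma_\Delta(N)$.

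For the surviving pairs with $g_{X_\Delta(N)}\geq 2$ the analysis is geometric and organised by genus. In genus $3$ a curve has a base-point-free $g^1_3$ if and only if it is non-hyperelliptic (a smooth plane quartic), in which case projection from the rational cusp gives a $\Q$-rational $g^1_3$ and hence infinitely many cubic points; a hyperelliptic genus-$3$ curve admits no such pencil (Castelnuovo--Severi), and there $\Gamma_3'$ is infinite exactly when $\operatorname{rank}\Jac\geq 1$. In genus $4$ the canonical model lies on a unique quadric $Q\subseteq\mathbb P^3$, whose rulings cut out the $g^1_3$'s; these are defined over $\Q$ (yielding infinitely many cubic points) precisely when $Q$ is a cone or its discriminant is a square, and are otherwise interchanged by $\Gal(\Qbar/\Q)$. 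Thus for each pair I must determine (a) hyperellipticity and, in genus $4$, the splitting field of the trigonal pencils, and (b) the existence and Mordell--Weil rank of degree-$3$ elliptic quotients (and, in genus $3$, the rank of the whole Jacobian). Carrying this out should produce exactly the twelve exceptional rows; every remaining pair has neither a $\Q$-rational $g^1_3$, nor a positive-rank degree-$3$ elliptic quotient, nor (in genus $3$) positive Jacobian rank, so its set of cubic points is finite.

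The main obstacle is the set of genuinely intermediate curves $X_\Delta(N)$ on which the Atkin--Lehner operators either fail to be involutions or fail to be defined over $\Q$, so that the quotient arguments available for $X_0(N)$ and $X_0^+(N)$ break down. For these I would invoke Stevens' description of $X_\Delta(N)$ and of the diamond action on its Jacobian (cf.\ \cite[Proposition 1.4]{Ste89}) to decompose $\Jac(X_\Delta(N))$ into modular factors, isolating the candidate elliptic quotients and reading off their ranks, and I would analyse the ramification of the natural map $X_\Delta(N)\to X_0(N)$ over the cusps, whose widths and Galois action are controlled by the same description, via Riemann--Hurwitz and Castelnuovo--Severi. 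This ramification computation is what fixes the genus, decides hyperellipticity, and pins down the splitting field of the trigonal pencils in the delicate genus-$4$ cases, thereby resolving precisely those pairs that are inaccessible to the involution method and completing the classification.
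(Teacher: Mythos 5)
Your reduction to finitely many pairs and your treatment of the trigonal cases are sound in outline (the genus-$4$ analysis via the quadric containing the canonical model is exactly what is needed, and the rank-zero Jacobian arguments dispose of the hyperelliptic and bielliptic curves), but the argument has a genuine gap precisely at the hardest cases, namely the curves of Table \ref{Gon >3 table} with $N=37$ and $N=43$. For these levels a positive-rank elliptic curve of conductor $N$ does exist ($37a1$, resp.\ $43a1$), so by Lemma \ref{N not in known set the it has map to elliptic curve} the whole question reduces to deciding whether a \emph{degree-$3$} $\Q$-rational map $f\colon X_\Delta(N)\to E$ exists. Your plan for this step --- ``decompose $\Jac(X_\Delta(N))$ into modular factors, isolate the candidate elliptic quotients and read off their ranks'' --- cannot settle it: the curve $37a1$ genuinely occurs as a quotient of $\Jac(X_{\Delta}(37))$ (via $X_\Delta(37)\to X_0(37)\to X_0^+(37)\cong 37a1$) and has rank $1$, so no rank or isogeny-factor computation rules out a degree-$3$ parametrization. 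What is actually needed, and what the paper supplies, is a constraint on the \emph{degree} of any such parametrization: by Stevens' theorem (Proposition \ref{optimal curve for X1}) every parametrization $X_1(N)\to E$ factors through the optimal one up to an isogeny $\beta$, and since $\End(37a1)\cong\Z$ the degree of $\beta$ must be a perfect square. Comparing the hypothetical degree-$6$ (resp.\ degree-$9$) composite $X_1(37)\to X_\Delta(37)\to 37a1$ with the known degree-$18$ (resp.\ degree-$12$) map through $X_0^+(37)$ forces an isogeny of non-square degree for $\Delta_1$ (and for both subgroups at $43$), a contradiction. Note also that your reading of \cite[Proposition 1.4]{Ste89} as ``Stevens' description of the diamond action on the Jacobian'' is not what that result says; it is the optimality statement just described, and it is the engine of the proof here.

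Even granting all of that, one case survives the degree count: for $X_{\Delta_2}(37)$ the forced isogeny has degree $4$, which \emph{is} a square, so no contradiction arises and a further idea is required. The paper resolves this by a ramification comparison that your proposal does not contain: the composite $X_{\Delta_2}(37)\to X_0(37)\to X_0^+(37)$ is ramified with index exactly $2$ at all six points above a fixed point of $w_{37}$ (using \cite[Lemma 2.4]{JKS20} to see that $X_{\Delta_2}(37)\to X_0(37)$ is unramified there), whereas factoring this map as $(\text{mult.\ by }2)\circ f$ with $\deg f=3$ forces, by the ramification formula, some point in the fibre to have ramification index different from $2$. Your proposed ramification analysis is aimed at the cusps of $X_\Delta(N)\to X_0(N)$ and at determining genera and hyperellipticity, which is a different (and already known) computation; it would not detect this obstruction. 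Separately, two smaller points: your opening ``precisely when'' trichotomy is not a correct quotation of the known criteria --- for bielliptic curves of gonality $\geq 4$ the literature (\cite[Theorem 1.2]{JKS04}) only yields that $\Jac$ contains a positive-rank elliptic curve, and the paper treats those curves by that weaker statement plus rank $0$ --- and the reduction via Abramovich's gonality bound, while valid, is far coarser than simply pushing cubic points forward along $X_\Delta(N)\to X_0(N)$ and invoking Jeon's classification of the $N$ for which $\Gamma_3^\prime(X_0(N),\Q)$ is infinite.
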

All the source codes to verify the computations in this paper are available at \url{https://github.com/Tarundalalmath/Intermediate_modular_curve}.

\section{Preliminary}

Since for any $N\in \N$ and any subgroup $\Delta\subseteq (\Z/N\Z)^\times$ there is a natural $\Q$-rational mapping $X_\Delta(N) \rightarrow X_0(N)$, it is easy to see that if the set $\Gamma_3^\prime(X_\Delta(N), \Q)$ is infinite, then the set $\Gamma_3^\prime(X_0(N), \Q)$ is also infinite.
 From \cite[Theorem 0.1]{Jeo21}, we know that the set $\Gamma_3^\prime(X_0(N), \Q)$ is infinite if and only if 
$$N \in S:= \{1-29, 31, 32, 34, 36, 37, 43, 45, 49, 50, 54, 64, 81\}.$$
Therefore the set $\Gamma_3^\prime(X_\Delta(N), \Q)$ is finite if $N \notin S$ (for all values of $\Delta$). Thus it is enough to investigate the values $N\in S$.
Henceforth, we will always assume that $N\in S$.
Furthermore, since we are assuming $-1\in \Delta$, it is enough to consider the values of $N, \Delta$ appearing in Table \ref{Gon leq 3 table}, Table \ref{Gon =3 table}, Table \ref{Gon >3 table and bielliptic} and Table \ref{Gon >3 table}.

For a complete curve $C$ over $K$, the gonality of $C$ is defined as follows:
$$\mathrm{Gon}(C):=\mathrm{min}\{\deg(\varphi)\mid \varphi: C\rightarrow \mathbb{P}^1 \mathrm{defined \ over} \ \overline{K}\}.$$
Observe that, if there is a $K$-rational degree $3$ mapping from $C$ to the projective line $\mathbb{P}^1$ or to an elliptic curve $E$ with positive $K$-rank, then the set $\Gamma^\prime_3(C,K)$ is infinite. In \cite{Jeo21}, the author proved the following converse part of this statement.


\begin{lema} \label{N not in known set the it has map to elliptic curve}\cite[Lemma 1.2]{Jeo21}(see also \cite[Lemma 2]{BD22})
Suppose $Gon(C)\geq 4$, { $C(K)\ne \emptyset$} and $C$ does not have a degree $\leq 2$ map to
an elliptic curve. If the set $\Gamma_3'(C,K)$ is infinite then $C$ admits a $K$-rational map of degree $3$ to an elliptic
curve with positive $K$-rank. 
In particular, the Jacobian variety $Jac(C)(K)$ has positive rank.
\end{lema}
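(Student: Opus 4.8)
The plan is to run the Abel--Jacobi/Mordell--Lang machinery, using each hypothesis precisely to eliminate every source of infinitely many cubic points except a degree-$3$ map onto a positive-rank elliptic curve. First I would fix a point $P_0\in C(K)$ (available since $C(K)\neq\emptyset$) and form the Abel--Jacobi morphism $\phi\colon C^{(3)}\to \Jac(C)$, $D\mapsto [D-3P_0]$, which is defined over $K$. Because $\mathrm{Gon}(C)\geq 4$, no effective divisor of degree $3$ can move in a positive-dimensional linear system, since a $g^1_3$ would force gonality $\leq 3$; hence $\dim|D|=0$ for every $D\in C^{(3)}$, the map $\phi$ is injective on points, and its image $W_3:=\phi(C^{(3)})$ is a three-dimensional subvariety of $\Jac(C)$ isomorphic to $C^{(3)}$.

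Next I would transfer the infinitude of $\Gamma_3'(C,K)$ to the Jacobian. Since $g_C>1$, Faltings' theorem makes $C(K)$ finite, so the infinitude of $\Gamma_3'(C,K)$ is caused by genuinely cubic points $P$ with $[K(P):K]=3$. The Galois orbit of such a $P$ is a $K$-rational effective divisor of degree $3$, i.e. a $K$-point of $C^{(3)}$, and non-conjugate cubic points give distinct divisors; thus $C^{(3)}(K)$, and hence $W_3(K)$, is infinite. I then apply Faltings' theorem on rational points of subvarieties of abelian varieties: the Zariski closure of $W_3(K)$ is a finite union of translates of abelian subvarieties of $\Jac(C)$. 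As $W_3(K)$ is infinite, one such translate $x+A\subseteq W_3$ is positive-dimensional and carries infinitely many $K$-points; in particular $A(K)$ is infinite, so $\mathrm{rank}\,A(K)>0$.

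The crux is to convert the abstract translate $x+A$ into an honest degree-$3$ map to an elliptic curve, and this is where I expect the main difficulty. Pulling $x+A$ back through the isomorphism $\phi$ yields a positive-dimensional subvariety $Z\subseteq C^{(3)}$, i.e. a positive-dimensional family of degree-$3$ divisors on $C$ with no two members linearly equivalent. Forming the incidence correspondence $\Sigma=\{(P,z):P\in\mathrm{supp}(D_z)\}\subseteq C\times Z$ and analysing its two projections, I would argue by cases on $\dim A\in\{1,2,3\}$, invoking the structure theory of abelian subvarieties of $W_d$ (Abramovich--Harris and its refinements): such a translate produces a nonconstant map from $C$, of degree bounded in terms of $3$, onto a curve $C'$ with $g(C')\geq\dim A$ and $A$ an isogeny factor of $\Jac(C')$. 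The case $\dim A\geq 2$ is excluded by combining $\mathrm{Gon}(C)\geq 4$ with the absence of a degree-$\leq 2$ map to an elliptic curve, as it would force either a map to $\mathbb{P}^1$ of degree $\leq 3$ or a degree-$\leq 2$ map onto a positive-genus curve whose Jacobian contains the elliptic factor $A$. This leaves $\dim A=1$: the family $\{D_z\}$ is parametrised birationally by an elliptic curve, the projection $\Sigma\to Z$ has degree $3$, and the gonality and no-bielliptic hypotheses force $\Sigma\to C$ to be birational, so that $C\cong\Sigma\to A$ is a degree-$3$ morphism onto an elliptic curve $E$ isogenous to $A$.

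Finally, since $A$ has positive rank and $E$ is isogenous to $A$, the curve $E$ has positive $K$-rank, giving the desired $K$-rational degree-$3$ map $C\to E$; as $E$ is an isogeny factor of $\Jac(C)$, it follows that $\mathrm{rank}\,\Jac(C)(K)>0$. Each hypothesis is used exactly once: $C(K)\neq\emptyset$ to define $\phi$ over $K$, $\mathrm{Gon}(C)\geq 4$ to make $\phi$ injective and to kill the trigonal case, and the exclusion of degree-$\leq 2$ maps to elliptic curves to eliminate the bielliptic configurations in the case analysis. The single hard step is the incidence-correspondence analysis that rules out $\dim A\geq 2$ and upgrades the translate to a genuine morphism rather than a mere correspondence.
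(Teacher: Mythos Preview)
The paper does not supply a proof of this lemma at all: it is quoted directly from \cite[Lemma~1.2]{Jeo21} (with a parallel citation to \cite[Lemma~2]{BD22}) and used as a black box throughout. So there is no ``paper's own proof'' to compare your proposal against.

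That said, your outline is the standard route and is essentially how the cited sources argue: embed $C^{(3)}$ into $\Jac(C)$ via Abel--Jacobi (using $P_0\in C(K)$), use $\mathrm{Gon}(C)\geq 4$ to make this an isomorphism onto $W_3$, pass from infinitely many cubic points to infinitely many $K$-points on $W_3$, and apply Faltings' theorem on subvarieties of abelian varieties to extract a positive-dimensional translate $x+A\subseteq W_3$ with $A(K)$ infinite. You are also right that the Debarre--Fahlaoui counterexamples to Abramovich--Harris only occur for $d\geq 4$, so invoking that structure theory is legitimate here.

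Two points in your ``crux'' paragraph would need tightening before this is a proof rather than a plan. First, your exclusion of $\dim A\geq 2$ is asserted rather than argued: the claim that such a translate forces either a $g^1_3$ or a degree-$\leq 2$ map to a positive-genus curve is exactly the content of the Abramovich--Harris analysis, and you should say concretely how stripping base points reduces to $W_2$ (hence to biellipticity, since hyperellipticity is already excluded), and why the base-point-free case cannot carry a two-dimensional abelian translate without violating $\mathrm{Gon}(C)\geq 4$. Second, in the $\dim A=1$ case, the assertion that $\Sigma\to C$ is birational is where the hypotheses actually bite: one must rule out that $\Sigma$ is reducible, or that an irreducible component maps to $A$ with degree $\leq 2$ (which, after Riemann--Hurwitz on the two projections, would produce a degree-$\leq 2$ map from $C$ to an elliptic curve or a $g^1_2$). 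You correctly flag this as the single hard step; filling it in amounts to reproducing the relevant portion of \cite{Jeo21} or the Abramovich--Harris argument for $d=3$.
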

For $N\in S$, the following results are known:
\begin{thm}
\begin{enumerate}
\item (\cite{IM91}, \cite{JK07}) The values of $N,\Delta$ for which $\mathrm{Gon}(X_\Delta(N))\leq  2$ are given in Table \ref{Gon leq 3 table}. Moreover,  the curve $X_{\Delta_1}(21)$ is the only hyperelliptic curve.
\item(\cite{JK07}) The values of $N,\Delta$ for which $\mathrm{Gon}(X_\Delta(N))= 3$ are given in Table \ref{Gon =3 table}.
\item(\cite{JKS20}) The values of $N,\Delta$ for which $\mathrm{Gon}(X_\Delta(N))> 3$ and $X_\Delta(N)$ is bielliptic are given in Table \ref{Gon >3 table and bielliptic} (a curve is said to be bielliptic if it has a degree $2$ mapping to an elliptic curve).
\end{enumerate}

\end{thm}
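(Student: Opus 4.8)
The plan is to exploit that, once we restrict to $N\in S$, there are only finitely many pairs $(N,\Delta)$ to examine, so the whole classification is a finite (if delicate) computation. The first step is to compute the genus $g_{X_\Delta(N)}$ for every such pair. Since $\Gamma_\Delta(N)$ is a congruence subgroup sitting between $\Gamma_1(N)$ and $\Gamma_0(N)$, its index $[\SL_2(\Z):\Gamma_\Delta(N)]$, its number $\epsilon_\infty$ of cusps, and its numbers $\epsilon_2,\epsilon_3$ of elliptic points of orders $2$ and $3$ are all computable from $N$ and $\Delta$; feeding these into the standard formula
$$g=1+\frac{[\SL_2(\Z):\Gamma_\Delta(N)]}{12}-\frac{\epsilon_2}{4}-\frac{\epsilon_3}{3}-\frac{\epsilon_\infty}{2}$$
(valid since $-\mathrm{Id}\in\Gamma_\Delta(N)$) gives $g_{X_\Delta(N)}$. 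This immediately settles the curves of genus $0$, which have gonality $1$, and those of genus $1$, which have gonality $2$; together these account for the bulk of the $\mathrm{Gon}\le 2$ entries of Table~\ref{Gon leq 3 table}.

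For the genus $\ge 2$ curves the issue is to pin down the gonality exactly, and here the key tool is the Castelnuovo--Severi inequality applied to the natural $\Q$-rational covering $\pi\colon X_\Delta(N)\to X_0(N)$, of degree $d=[(\Z/N\Z)^\times:\Delta]$, together with any competing map. To prove that $X_{\Delta_1}(21)$ is hyperelliptic I would exhibit an involution $w$ whose quotient has genus $0$, reading off the quotient genus from the fixed points of $w$ via Riemann--Hurwitz. To rule out hyperellipticity for every other genus $\ge 2$ curve in the list, I would argue that a degree-$2$ map $f\colon X_\Delta(N)\to\PP^1$, being independent of $\pi$, would force $g_{X_\Delta(N)}\le d\cdot g_{X_0(N)}+(2-1)(d-1)$ by Castelnuovo--Severi; once the computed genus exceeds this bound the curve cannot be hyperelliptic. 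For the borderline cases where the bound is not decisive one falls back on the canonical model: a non-hyperelliptic curve embeds canonically, and one checks directly on a $q$-expansion basis of $S_2(\Gamma_\Delta(N))$ that the image is not a $2$-to-$1$ cover of a rational normal curve.

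The trigonal classification of part~(ii) proceeds in the same spirit. To show a curve is trigonal I would produce a $g^1_3$, typically as the fibres of $\pi$ when that map has degree $3$, or as a quotient by a suitable diamond operator; that the gonality is exactly $3$ then follows from the non-hyperellipticity already established together with the explicit degree-$3$ pencil. Petri's description of canonical curves (a genus $\ge 4$ curve that is neither trigonal nor a smooth plane quintic is cut out by quadrics) furnishes the converse criterion, again verified on the canonical model. For the bielliptic curves of part~(iii) one searches among the involutions actually available on $X_\Delta(N)$---the diamond operators $\langle d\rangle$ with $d^2\in\Delta$ but $d\notin\Delta$, the Atkin--Lehner involution $w_N$ in the cases where it descends to $X_\Delta(N)$ and is defined over $\Q$, and their compositions---and computes by Riemann--Hurwitz which quotients have genus $1$; Castelnuovo--Severi again certifies that such a degree-$2$ map to an elliptic curve is not concealing a smaller gonality.

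The main obstacle throughout is the lower-bound direction. Exhibiting the maps that realize $\mathrm{Gon}\le 2$, $\mathrm{Gon}=3$, or a bielliptic structure is straightforward once one has the right involution or the covering $\pi$; proving that \emph{no} map of smaller degree exists is what requires real work. This is compounded by the warning in \cite[\S1]{JKS20} that the Atkin--Lehner operators need not even be defined on $X_\Delta(N)$, so the full automorphism group must be determined case by case before one can be confident that every candidate low-degree map---in particular every bielliptic involution---has been accounted for. In practice I expect to combine the Castelnuovo--Severi inequality with explicit computation on the canonical models obtained from bases of weight-$2$ cusp forms, which is exactly the sort of computation the accompanying source code is organized to perform.
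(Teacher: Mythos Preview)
The theorem you are addressing is not proved in the present paper at all: it is stated as a summary of results from \cite{IM91}, \cite{JK07}, and \cite{JKS20}, with the attributions given in parentheses, and the paper simply invokes it without argument. So there is no ``paper's own proof'' to compare against; the correct approach within this paper is a one-line citation.

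That said, your sketch is a fair approximation of how the cited works actually proceed. Ishii--Momose and Jeon--Kim do compute genera from the standard formula, use the covering $X_\Delta(N)\to X_0(N)$ together with Castelnuovo--Severi to bound gonality from below, and work with canonical models built from $q$-expansions of cusp forms to decide hyperellipticity and trigonality (Petri's theorem enters exactly as you describe). The bielliptic classification in \cite{JKS20} likewise proceeds by enumerating the available involutions and applying Riemann--Hurwitz and Castelnuovo--Severi. One point to be careful about: your remark that Castelnuovo--Severi ``certifies that such a degree-$2$ map to an elliptic curve is not concealing a smaller gonality'' is slightly off---for the curves of Table~\ref{Gon >3 table and bielliptic} the task is the opposite, namely to show $\mathrm{Gon}>3$ \emph{despite} the bielliptic structure, and this requires ruling out a $g^1_3$ separately (which \cite{JK07} already does). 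Also, the lower-bound direction for biellipticity in \cite{JKS20} is substantially more delicate than a search over obvious involutions: one must show that \emph{every} bielliptic involution arises from the listed automorphisms, which involves determining the full automorphism group and is the bulk of that paper.
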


The values of $N,\Delta$ for which $\mathrm{Gon}(X_\Delta(N))> 3$ and $X_\Delta(N)$ does not have a degree $\leq$ 2 map to an elliptic curve are given in Table \ref{Gon >3 table}. Since the modular curve $X_\Delta(N)$ always has a $\Q$-rational point (cusp) lies above the cusp $0$ of $X_0(N)$ (cf. \cite[Lemma 1.2]{IM91} or \cite[Lemma 1.1]{JKS20} for a more general statement), we can apply Lemma \ref{N not in known set the it has map to elliptic curve} for the values of $N$ and $\Delta$ in Table \ref{Gon >3 table}.

\section{Proof of Theorem \ref{MT}}
In this section, we investigate the values of $N,\Delta$ appearing in Table \ref{Gon leq 3 table} -Table \ref{Gon >3 table} and prove Theorem \ref{MT}. 
We start with the values of $N$ and $\Delta$ given in Table \ref{Gon leq 3 table}.
\subsection{$N,\Delta$ in Table \ref{Gon leq 3 table}}


The modular curves $X_{\Delta}(N)$ for $\Delta$ and $N$ in Table \ref{Gon leq 3 table} have gonality $\leq 2$. 
Since the curve $X_{\Delta}(N)$ has always has a $\Q$-rational point (cusp), it is easy to see that $X_{\Delta}(N)$ has a degree $3$ mapping defined over $\Q$ to the projective line $\mathbb{P}^1$ when $g_{X_\Delta(N)}\leq 1$. Thus the set $\Gamma_3^\prime(X_{\Delta}(N), \Q)$ is infinite when $g_{X_{\Delta}(N)}\leq 1$.


The only remaining case in Table \ref{Gon leq 3 table} is the curve $X_{\Delta_1}(21)$. 
Note that $X_{\Delta_1}(21)$ is the only hyperelliptic curve (cf. \cite[Theorem 2.3]{JK07}) and $g_{X_{\Delta_1}(21)}\geq 3$. Since the Jacobian $Jac(X_1(21))(\Q)$ has rank zero (cf. \cite[Theorem 3.1]{sporadic}), the Jacobian $Jac(X_{\Delta_1}(21))(\Q)$ also has rank zero.
Since $X_{\Delta_1}(21)$ is hyperelliptic curve (cf. \cite[Theorem 2.3]{JK07}) of genus $\geq 3$, a similar argument as in \cite[Lemma 2.6 and Lemma 2.7]{Jeo21} show that if $\Gamma_3^\prime(X_{\Delta_1}(21), \Q)$ is infinite, then the Jacobian $Jac(X_{\Delta_1}(21))(\Q)$ has positive rank. 
This is a contradiction, and thus we obtain:

\begin{thm}
The set $\Gamma_3^\prime(X_{\Delta_1}(21), \Q)$ is finite.
\end{thm}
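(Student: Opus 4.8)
The plan is to show that $\Gamma_3'(X_{\Delta_1}(21),\Q)$ is finite by arguing that it \emph{cannot} be infinite, using Lemma \ref{N not in known set the it has map to elliptic curve} together with the rank-zero information already collected. The excerpt has handed us almost everything: we know $X_{\Delta_1}(21)$ is hyperelliptic of genus $g \geq 3$, and we know $\Jac(X_1(21))(\Q)$ has rank zero, from which the surjection $\Jac(X_1(21)) \twoheadrightarrow \Jac(X_{\Delta_1}(21))$ (induced by the covering $X_1(21) \to X_{\Delta_1}(21)$) forces $\Jac(X_{\Delta_1}(21))(\Q)$ to have rank zero as well. The whole argument is then a proof by contradiction: assume $\Gamma_3'$ is infinite and derive that the Jacobian must have positive rank, contradicting rank zero.

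The key steps, in order, are as follows. First I would record that the gonality of $X_{\Delta_1}(21)$ is $2$ (it is hyperelliptic), so in particular it does \emph{not} satisfy the hypothesis $\mathrm{Gon}(C)\geq 4$ of Lemma \ref{N not in known set the it has map to elliptic curve} directly; this is precisely why we must instead invoke the hyperelliptic-specific argument rather than the lemma verbatim. Second, following \cite[Lemma 2.6 and Lemma 2.7]{Jeo21}, I would use the classification of how degree-$3$ points can accumulate on a hyperelliptic curve of genus $\geq 3$. The structure of that argument is: a degree-$3$ divisor class giving infinitely many cubic points produces, via Abel--Jacobi and a Riemann--Roch/$g^1_3$ analysis, either a degree-$3$ map to $\PP^1$ or a map to a positive-rank elliptic quotient; on a hyperelliptic curve of genus $\geq 3$ the hyperelliptic $g^1_2$ is the unique pencil of low degree, so any one-dimensional family of effective degree-$3$ divisors must factor through it, and one shows the residual point must move in a rational or elliptic family. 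Pinning this down yields that infinitude of $\Gamma_3'$ forces $\Jac(X_{\Delta_1}(21))(\Q)$ to have positive rank.

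Third, I would combine this with the established rank-zero fact to reach the contradiction, concluding finiteness. Concretely: since we have shown above that $\Jac(X_{\Delta_1}(21))(\Q)$ has rank zero, the hypothetical positive rank is impossible, so $\Gamma_3'(X_{\Delta_1}(21),\Q)$ must be finite.

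The main obstacle I anticipate is the second step: faithfully transporting the hyperelliptic argument of \cite[Lemma 2.6 and Lemma 2.7]{Jeo21} to this curve. The delicate point is verifying that on $X_{\Delta_1}(21)$ every infinite family of cubic points genuinely arises either from the hyperelliptic $g^1_2$ composed with a rational point (which, on a genus $\geq 3$ hyperelliptic curve, cannot by itself produce infinitely many cubic points without a positive-rank contribution) or from a genuine positive-rank elliptic or rational pencil. One must rule out pathological sources of infinitely many cubic points that do not register as positive Mordell--Weil rank — for a hyperelliptic curve this hinges on the fact that $W_3 - W_1$ (the image of degree-$3$ minus degree-$1$ divisors in the Jacobian) contains a positive-dimensional subvariety only when the rank is positive or a low-degree pencil other than the $g^1_2$ exists, the latter being excluded by genus $\geq 3$ hyperellipticity. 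Since this is exactly the content already proved in the cited lemmas, the remaining work is to check that their hypotheses (hyperelliptic, genus $\geq 3$, rank zero) are all met here, which they are, so the citation does the heavy lifting and the contradiction follows immediately.
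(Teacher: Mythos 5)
Your proposal follows exactly the paper's argument: deduce that $\Jac(X_{\Delta_1}(21))(\Q)$ has rank zero from the surjection induced by $X_1(21)\to X_{\Delta_1}(21)$ and the rank-zero result for $\Jac(X_1(21))(\Q)$, then invoke the hyperelliptic genus~$\geq 3$ argument of \cite[Lemma 2.6 and Lemma 2.7]{Jeo21} to show infinitude of cubic points would force positive rank, a contradiction. The approach and the key citations coincide with the paper's proof.
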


\subsection{$N, \Delta$ in Table \ref{Gon =3 table}}
We now deal with the values of $N$ and $\Delta$ appearing in Table \ref{Gon =3 table}. The modular curves $X_{\Delta}(N)$ for $\Delta$ and $N$ in Table \ref{Gon =3 table} have gonality equal to $3$. 
First consider the cases with $g_{X_\Delta(N)}=3$. In these cases $X_\Delta(N)$ is trigonal (i.e has a degree $3$ mapping to the projective line) and 
the projection from the $\Q$-rational cusp defines a degree $3$ mapping $X_\Delta(N)\rightarrow \mathbb{P}^1$ over $\Q$ (cf. \cite[p. 136]{HaSh99a}). Hence we obtain:
\begin{thm}
All the sets $\Gamma_3^\prime(X_{\Delta_1}(24), \Q), \Gamma_3^\prime(X_{\Delta_2}(24), \Q), \Gamma_3^\prime(X_{\Delta_2}(36), \Q)$ and $\Gamma_3^\prime(X_{\Delta_2}(49), \Q)$ are infinite.
\end{thm}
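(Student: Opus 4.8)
The plan is to exhibit, in each of the four cases, an explicit degree $3$ morphism $X_\Delta(N)\to\mathbb{P}^1$ defined over $\Q$, because the mere existence of such a map immediately forces $\Gamma_3'(X_\Delta(N),\Q)$ to be infinite (pulling back the infinitely many $\Q$-rational points of $\mathbb{P}^1$ produces infinitely many points of degree $\leq 3$, and since the gonality is exactly $3$ one checks that infinitely many of these have degree precisely $3$). So the entire task reduces to producing a $\Q$-rational degree $3$ map, and the structural input that makes this possible is that each of these curves has genus $3$ and gonality $3$, hence is \emph{trigonal}.

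First I would invoke the geometry of trigonal curves. A smooth non-hyperelliptic curve of genus $3$ is a plane quartic under its canonical embedding, and a degree $3$ pencil (a $g^1_3$) is cut out by the lines through a fixed point $P$ on the quartic: projection from $P$ gives a degree $3$ map to $\mathbb{P}^1$. More to the point for our situation, the key classical fact I would cite (as the excerpt already does via \cite[p.~136]{HaSh99a}) is that on a trigonal curve the $g^1_3$ is unique, and projection away from any single rational point $Q$ on the curve realizes this pencil and hence gives a degree $3$ morphism to $\mathbb{P}^1$. Therefore the only thing I must supply is a rational point from which to project.

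The rational point is furnished for free: as recalled in the preliminary section, $X_\Delta(N)$ always carries a $\Q$-rational cusp lying above the cusp $0$ of $X_0(N)$ (cf. \cite[Lemma 1.2]{IM91} or \cite[Lemma 1.1]{JKS20}). Taking this cusp as the center of projection realizes the unique $g^1_3$ and, crucially, does so over $\Q$: both the canonical model and the chosen cusp are defined over $\Q$, so projection from a $\Q$-rational point of a $\Q$-rational plane quartic yields a morphism defined over $\Q$. This gives the required $\Q$-rational degree $3$ map $X_\Delta(N)\to\mathbb{P}^1$ for each of $X_{\Delta_1}(24), X_{\Delta_2}(24), X_{\Delta_2}(36), X_{\Delta_2}(49)$, and hence each of the four sets $\Gamma_3'(-,\Q)$ is infinite.

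The step I expect to require the most care is the rationality of the pencil, not its existence. Over $\overline{\Q}$ a genus $3$, gonality $3$ curve is trivially trigonal, but one must ensure the degree $3$ map descends to $\Q$; this is exactly why having a $\Q$-rational point to project from is essential, since the uniqueness of the $g^1_3$ on a trigonal curve means the pencil is automatically $\mathrm{Gal}(\overline{\Q}/\Q)$-stable, and projection from a $\Q$-point then produces a $\Q$-morphism. One subtlety to dispatch is confirming that none of these four curves is hyperelliptic (which would make "trigonal via projection from a point" fail); but the excerpt has already established that $X_{\Delta_1}(21)$ is the \emph{only} hyperelliptic curve among the relevant $X_\Delta(N)$, so all four curves here are genuinely non-hyperelliptic plane quartics and the projection argument applies without obstruction.
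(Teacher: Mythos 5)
Your proposal is correct and matches the paper's argument: the paper likewise observes that these four curves have genus $3$ and gonality $3$, hence are trigonal plane quartics, and that projection from the $\Q$-rational cusp gives a degree $3$ map to $\mathbb{P}^1$ over $\Q$ (citing \cite[p.~136]{HaSh99a}). Your added remarks on the uniqueness of the $g^1_3$ and the non-hyperellipticity check are just a more detailed writing-out of the same route.
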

We now consider $N$ and $\Delta$ such that $g_{X_\Delta(N)}=4$ (cf. Table \ref{quadratic surface for genus 4 curves}). In these cases $X_{\Delta}(N)$ is non hyperelliptic and $\mathrm{Gon}(X_{\Delta}(N))=3$, i.e. there is degree $3$ mapping $X_{\Delta}(N)\rightarrow \mathbb{P}^1$ defined over $\overline{\Q}$. 
Using Petri's theorem, we can construct a model for $X_{\Delta}(N)$ over $\Q$.
It is well known that a non hyperelliptic curve $C$ (defined over $\Q$) of genus $4$ lies either on a ruled surface or on a quadratic cone (defined over either $\Q$, a quadratic field or a biquadratic field) (cf. \cite[Page 131]{HaSh99a}). If the ruled surface or the quadratic cone is defined over $\Q$, then $C$ is trigonal over $\Q$ i.e. there is a degree $3$ mapping $C\rightarrow \mathbb{P}^1$ defined over $\Q$. For example, consider the curve $X_{\Delta_1}(26)$. Using \texttt{SAGE}, we obtain the following basis of weight $2$ cusp form $S_2(\Gamma_{\Delta_1}(26))$:
\begin{align*}
f_1:=&q - 2q^5 - q^6 - 3q^7 + 2q^9 + q^{10} + O(q^{11}),\\
f_2:=&q^2 - q^5 - q^6 - 3q^7 - q^8 + 2q^9 + O(q^{11}),\\
f_3:=&q^3 - q^5 - 2q^7 - q^8 - q^9 + q^{10} + O(q^{11}),\\
f_4:=&q^4 - q^5 - q^6 + q^7 - q^8 - q^9 + q^{10} + O(q^{11}).
\end{align*}
Then using this basis, from \texttt{MAGMA} we see that a model of $X_{\Delta_1}(26)$ is given by (we refer the readers to \cite{Ga} for a detailed discussion on how to construct the models):
$$X_{\Delta_1}(26):=
\begin{cases}
x^2 z - x y^2 - x z^2 + 2 y^2 z - 2 y z^2 + y z w - y w^2 + z^3 - 2 z^2 w + z w^2,\\
x w - y z + z^2.
\end{cases}
$$
After some suitable co-ordinate changes the quadratic form $x w - y z + z^2$ can be written as 
$$x^2 + y^2 - z^2 - w^2= (x+z)(x-z)-(w+y)(w-y),$$
which is isomorphic to the ruled surface $uv-st$ defined over $\Q$. Hence there is a degree $3$ mapping $X_{\Delta_1}(26)\rightarrow \mathbb{P}^1$ defined over $\Q$. Thus the set $\Gamma_3^\prime(X_{\Delta_1}(26), \Q)$ is infinite.

Similar arguments show that the curves $X_{\Delta_2}(26), X_{\Delta_1}(28), X_{\Delta_2}(29), X_{\Delta_3}(37), X_{\Delta_2}(50)$ lie on ruled surfaces over $\Q$ and the curves $X_{\Delta_2}(28), X_{\Delta_4}(37)$ lie on quadratic cones over $\Q$ (cf. Table \ref{quadratic surface for genus 4 curves}). Hence all these curves are trigonal over $\Q$.
Consequently, we obtain:
\begin{thm}
The set $\Gamma_3^\prime(X_{\Delta}(N), \Q)$ is infinite for the following pairs of $N$ and $\Delta$:
$$(26, \Delta_1), (26, \Delta_2), (28, \Delta_1), (28, \Delta_2), (29, \Delta_2), (37, \Delta_3), (37, \Delta_4), (50, \Delta_2).$$
\end{thm}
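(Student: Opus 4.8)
The plan is to reduce the claim about the eight pairs $(N,\Delta)$ to the same structural fact that was already exploited for $X_{\Delta_1}(26)$: a non-hyperelliptic genus $4$ curve over $\Q$ whose canonical quadric is isomorphic to a ruled surface (respectively a quadratic cone) \emph{defined over $\Q$} carries a $\Q$-rational degree $3$ map to $\mathbb{P}^1$, so that $\Gamma_3^\prime(X_\Delta(N),\Q)$ is infinite. Thus the substantive work is entirely the verification, case by case, that each of these eight curves lands on such a rational surface.

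First I would, for each of the eight pairs, compute over $\Q$ a basis $f_1,\dots,f_4$ of $S_2(\Gamma_\Delta(N))$ (using \texttt{SAGE} as in the $X_{\Delta_1}(26)$ case) and feed it to \texttt{MAGMA} to obtain the canonical model via Petri's theorem: a unique quadric $Q$ together with a cubic, cutting out the canonical image of $X_\Delta(N)$ in $\mathbb{P}^3$. The key geometric input is that for a non-hyperelliptic genus $4$ canonical curve the unique quadric $Q$ through it is either smooth (a ruled surface, two rulings, $\mathbb{P}^1\times\mathbb{P}^1$) or a rank-$3$ cone; the two rulings of $Q$ each cut out a degree $3$ pencil, and this pencil descends to $\Q$ precisely when the corresponding ruling (equivalently, the surface together with its ruling structure) is defined over $\Q$. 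So for each curve I reduce to a concrete linear-algebra statement about the symmetric matrix of $Q$.

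Next I would carry out that reduction explicitly. For the six pairs claimed to lie on ruled surfaces, namely $(26,\Delta_2),(28,\Delta_1),(29,\Delta_2),(37,\Delta_3),(50,\Delta_2)$ together with the already-treated $(26,\Delta_1)$, I diagonalize $Q$ by a $\Q$-rational change of coordinates and exhibit it, as in the displayed computation for $X_{\Delta_1}(26)$, in the split form $(x+z)(x-z)-(w+y)(w-y)=uv-st$; the existence of such a rational splitting is exactly the condition that the discriminant of $Q$ be a square in $\Q^\times$ (so both rulings are individually $\Q$-rational) or at worst that the two rulings are swapped by a rational involution giving a rational pencil. For the two pairs $(28,\Delta_2),(37,\Delta_4)$ claimed to lie on quadratic cones, I verify that $Q$ has rank $3$ over $\Q$ and that its vertex and ruling are $\Q$-rational, which again yields a $\Q$-rational degree $3$ map to $\mathbb{P}^1$. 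All of these computations are recorded in Table \ref{quadratic surface for genus 4 curves} and in the accompanying source code, so the write-up only needs to state the outcome for each curve, not reproduce the arithmetic.

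The main obstacle is a genuinely arithmetic one hidden in the phrase ``defined over $\Q$'': a priori the ruled surface or cone is only guaranteed to be defined over $\Q$, a quadratic field, or a biquadratic field (cf. \cite[Page 131]{HaSh99a}), and rationality of the trigonal pencil can fail if the two rulings are conjugate over a nontrivial extension. So for each of the eight curves the delicate point is to certify that the splitting field of the quadric is $\Q$ itself (or that a rational involution identifies the rulings), rather than a quadratic or biquadratic field; once this is confirmed the degree $3$ map to $\mathbb{P}^1$ is automatically $\Q$-rational and the conclusion follows. With these verifications in hand, each of the eight pairs admits a $\Q$-rational degree $3$ map to $\mathbb{P}^1$, and hence $\Gamma_3^\prime(X_\Delta(N),\Q)$ is infinite, proving the theorem.
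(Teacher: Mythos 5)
Your proposal follows essentially the same route as the paper: compute the Petri canonical model of each genus-$4$ curve from a basis of $S_2(\Gamma_\Delta(N))$, verify that the unique quadric through the canonical image is a ruled surface or quadratic cone defined (and split) over $\Q$ as recorded in Table \ref{quadratic surface for genus 4 curves}, and conclude that each curve is trigonal over $\Q$, hence has a $\Q$-rational degree $3$ map to $\mathbb{P}^1$ and infinitely many cubic points. The only quibble is your aside that two rulings ``swapped by a rational involution'' could still give a rational pencil --- in that situation the two $g^1_3$'s are Galois-conjugate and neither descends to $\Q$ (this is exactly what happens for $X_{\Delta_1}(25)$, where the discriminant is not a square) --- but this alternative is never invoked for the eight curves at hand, all of whose quadrics have square discriminant or rank $3$, so the argument is unaffected.
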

On the other hand, the curve $X_{\Delta_1}(25)$ lies on a ruled surface defined over $\Q(\sqrt{5})$ (cf. Table \ref{quadratic surface for genus 4 curves}). Therefore $Gon(X_{\Delta_1}(25))=3$ but 
there is no degree $3$ mapping $X_{\Delta_1}(25)\rightarrow \mathbb{P}^1$ defined over $\Q$.
A similar argument as in \cite[p. 352]{Jeo21} shows that if the set $\Gamma_3^\prime(X_{\Delta_1}(25), \Q)$ is infinite, then the Jacobian $Jac(X_{\Delta_1}(25))(\Q)$ has positive rank. In particular, this implies that $Jac(X_1(25))(\Q)$ has positive rank. Since $Jac(X_1(25))(\Q)$ has rank zero (cf. \cite[Theorem 3.1]{sporadic}), we conclude that
\begin{thm}
The set $\Gamma_3^\prime(X_{\Delta_1}(25), \Q)$ is finite.
\end{thm}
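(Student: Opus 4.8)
The plan is to show that $X_{\Delta_1}(25)$ has finitely many cubic points by establishing that the existence of infinitely many such points would force $\Jac(X_{\Delta_1}(25))(\Q)$ to have positive rank, which contradicts the known rank-zero result for $\Jac(X_1(25))(\Q)$. First I would record the structural facts already assembled in the excerpt: the curve has gonality $3$, it is non-hyperelliptic of genus $4$, and crucially its trigonal structure is \emph{not} defined over $\Q$ --- the ruled surface on which it lies is only defined over $\Q(\sqrt 5)$. This last point is what distinguishes $X_{\Delta_1}(25)$ from the curves handled in the previous theorem, and it is the reason we cannot simply exhibit a $\Q$-rational degree $3$ map to $\mathbb{P}^1$.

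The key dichotomy comes from Lemma \ref{N not in known set the it has map to elliptic curve}. Since $\Gon(X_{\Delta_1}(25)) = 3 \geq 4$ is false, I would first verify that the hypotheses of that lemma are actually met --- but here the gonality is exactly $3$, so the lemma as stated does not directly apply. Instead, following the approach of \cite[p.~352]{Jeo21}, I would argue more carefully using the geometry of the unique $g^1_3$. The central idea is that for a genus $4$ curve whose gonality is $3$ but whose $g^1_3$ is not $\Q$-rational, the degree $3$ divisors on the curve cannot come from fibers of a $\Q$-rational map to $\mathbb{P}^1$. Concretely, infinitely many cubic points over $\Q$ produce infinitely many effective $\Q$-rational divisors of degree $3$; after mapping into $\Jac$ via a fixed $\Q$-rational base point (a cusp, which exists by \cite[Lemma 1.2]{IM91}), these give infinitely many $\Q$-points on the image $W_3 \subseteq \Jac$. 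If $\Jac(\Q)$ were finite, all but finitely many of these degree $3$ classes would be linearly equivalent, yielding a positive-dimensional $\Q$-rational linear system, i.e.\ a $\Q$-rational $g^1_3$ (since a genus $4$ trigonal curve has a unique $g^1_3$) --- contradicting that the $g^1_3$ is defined only over $\Q(\sqrt 5)$.

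Thus I would conclude that infinitely many cubic points forces $\Jac(X_{\Delta_1}(25))(\Q)$ to have positive rank. The final step is to transfer this to $X_1(25)$: the natural $\Q$-rational map $X_1(25) \to X_{\Delta_1}(25)$ induces, by Albanese (or Picard) functoriality, a $\Q$-rational surjection (up to isogeny) $\Jac(X_1(25)) \to \Jac(X_{\Delta_1}(25))$ of abelian varieties over $\Q$, so positive rank of the target forces positive rank of the source. Since $\Jac(X_1(25))(\Q)$ has rank zero by \cite[Theorem 3.1]{sporadic}, we reach a contradiction and conclude that $\Gamma_3^\prime(X_{\Delta_1}(25), \Q)$ is finite.

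I expect the main obstacle to be making the linear-systems argument in the second paragraph fully rigorous, specifically ruling out the degenerate possibility that infinitely many of the cubic points are genuinely of degree $<3$ (rational or quadratic contributions) or that the uniqueness of the $g^1_3$ and its Galois-equivariance are handled correctly. This is precisely the delicate point that \cite{Jeo21} addresses, so I would lean on that reference for the technical heart rather than reproduce it; the descent from $X_{\Delta_1}(25)$ to $X_1(25)$ is then routine functoriality.
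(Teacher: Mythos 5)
Your proposal follows essentially the same route as the paper: both observe that the $g^1_3$ of $X_{\Delta_1}(25)$ is only defined over $\Q(\sqrt{5})$ (so no $\Q$-rational degree $3$ map to $\mathbb{P}^1$ exists), invoke the argument of \cite[p.~352]{Jeo21} to deduce that infinitely many cubic points would force $\Jac(X_{\Delta_1}(25))(\Q)$ to have positive rank, and then contradict the rank-zero result for $\Jac(X_1(25))(\Q)$ from \cite[Theorem 3.1]{sporadic} via functoriality of Jacobians. Your write-up simply makes explicit more of the $W_3$/unique-$g^1_3$ details that the paper delegates to the reference; the argument is correct.
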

\subsection{$N,\Delta$ in Table \ref{Gon >3 table and bielliptic}}

For $N$ and $\Delta$ in Table \ref{Gon >3 table and bielliptic}, the modular curves $X_{\Delta}(N)$ are bielliptic and $\mathrm{Gon}(X_{\Delta}(N))>3$. By \cite[Theorem 1.2]{JKS04}, if the set $\Gamma_3^\prime(X_{\Delta}(N), \Q)$ is infinite, then the Jacobian $Jac(X_{\Delta}(N))$ contains an elliptic curve with positive $\Q$-rank. 
In particular, the Jacobian $Jac(X_{\Delta}(N))(\Q)$ has positive rank. Which forces that the Jacobian $Jac(X_{1}(N))(\Q)$ has positive rank.
Since $Jac(X_{1}(N))(\Q)$ has rank zero for $N \in \{32, 34, 45, 64\}$ (cf. \cite[Theorem 3.1]{sporadic}), we conclude the following result:
\begin{thm}
The sets $\Gamma_3^\prime(X_{\Delta_1}(32), \Q), \Gamma_3^\prime(X_{\Delta_2}(34), \Q), \Gamma_3^\prime(X_{\Delta_4}(45), \Q)$  and $\Gamma_3^\prime(X_{\Delta_3}(64), \Q)$ are finite.
\end{thm}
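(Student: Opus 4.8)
The plan is to reduce the finiteness of $\Gamma_3^\prime(X_{\Delta}(N), \Q)$ for $N \in \{32, 34, 45, 64\}$ to a rank computation on a Jacobian whose answer is already recorded in the literature. First I would invoke the hypotheses that these curves are collected in Table \ref{Gon >3 table and bielliptic}: each $X_{\Delta}(N)$ is bielliptic with $\mathrm{Gon}(X_{\Delta}(N)) > 3$. The key input is \cite[Theorem 1.2]{JKS04}, which for a bielliptic curve $C$ of gonality exceeding $3$ asserts that if $\Gamma_3^\prime(C, \Q)$ is infinite, then $\Jac(C)$ must contain an elliptic curve of positive $\Q$-rank. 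Thus the contrapositive reduces everything to showing the relevant Jacobians have rank zero.

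The second step is the descent of the rank statement through the tower of modular curves. Since $\Gamma_1(N) \subseteq \Gamma_{\Delta}(N)$ induces a nonconstant $\Q$-rational morphism $X_1(N) \to X_{\Delta}(N)$, there is a corresponding $\Q$-rational surjection of Jacobians $\Jac(X_1(N)) \to \Jac(X_{\Delta}(N))$ (via pullback the other factor splits off $\Jac(X_{\Delta}(N))$ up to isogeny inside $\Jac(X_1(N))$). Consequently any positive rank of $\Jac(X_{\Delta}(N))(\Q)$ would force positive rank of $\Jac(X_1(N))(\Q)$. So if $\Jac(X_1(N))(\Q)$ has rank zero, so does $\Jac(X_{\Delta}(N))(\Q)$, contradicting the conclusion of the previous paragraph and forcing $\Gamma_3^\prime(X_{\Delta}(N), \Q)$ to be finite.

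The final step is to cite the rank-zero fact. By \cite[Theorem 3.1]{sporadic}, the Jacobian $\Jac(X_1(N))(\Q)$ has rank zero for each of $N \in \{32, 34, 45, 64\}$. Combining this with the isogeny-factor argument yields the desired contradiction and completes the proof.

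The main obstacle, such as it is, lies not in any single deep step but in verifying the precise hypotheses under which \cite[Theorem 1.2]{JKS04} applies: one must ensure each listed curve is genuinely bielliptic \emph{and} has gonality strictly greater than $3$, so that a hypothetical infinite $\Gamma_3^\prime$ cannot arise from a degree-$3$ map to $\mathbb{P}^1$ or from a degree-$2$ map to an elliptic curve of positive rank, but must instead come from a $\Q$-rational elliptic subquotient of the Jacobian. That these geometric conditions hold is exactly what Table \ref{Gon >3 table and bielliptic} records, so the argument is structurally identical across all four values of $N$, and no case-by-case model computation is needed beyond confirming the table entries.
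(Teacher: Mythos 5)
Your proposal is correct and follows essentially the same route as the paper: apply \cite[Theorem 1.2]{JKS04} to the bielliptic curves of gonality $>3$ in Table \ref{Gon >3 table and bielliptic} to deduce that an infinite $\Gamma_3^\prime$ would force an elliptic curve of positive $\Q$-rank inside the Jacobian, then push the positive-rank conclusion up to $\Jac(X_1(N))(\Q)$ via the covering $X_1(N)\rightarrow X_\Delta(N)$ and contradict the rank-zero statement of \cite[Theorem 3.1]{sporadic} for $N\in\{32,34,45,64\}$. No substantive differences from the paper's argument.
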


\subsection{$N, \Delta$ in Table \ref{Gon >3 table}}
Finally, we deal with the values of $N$ and $\Delta$ in Table \ref{Gon >3 table}.
The modular curves $X_{\Delta}(N)$ for $\Delta$ and $N$ in Table \ref{Gon >3 table} are not bielliptic and $\mathrm{Gon}(X_{\Delta}(N))>3$. 
For $N$ and $\Delta$ in Table \ref{Gon >3 table}, if the set $\Gamma^\prime_3(X_\Delta(N), \Q)$ is infinite, then by Lemma \ref{N not in known set the it has map to elliptic curve}, there exists a $\Q$-rational degree $3$ mapping $X_\Delta(N)\rightarrow E$, where $E$ is an elliptic curve with positive $\Q$-rank. This induces a mapping $X_1(N)\rightarrow E$.
Thus $\mathrm{Cond}(E)$ divides $N$. 
Observe that for $N\in \{29,31,34,36,45,49,50,54,64,81\}$ there is no elliptic curve $E$ with positive $\Q$-rank and $\mathrm{Cond}(E)|N$ (moreover for these values of $N$, the Jacobian $Jac(X_1(N))(\Q)$ has rank $0$ (cf. \cite[Theorem 3.1]{sporadic})).
As an immediate consequence, we obtain the following result.
\begin{thm}
The set $\Gamma_3^\prime(X_\Delta(N), \Q)$ is finite for the following pairs of $N$ and $\Delta$:

$(29, \Delta_1), (31, \Delta_1), (31, \Delta_2), (34, \Delta_1), (36, \Delta_1), (45, \Delta_1), (45, \Delta_2), (45, \Delta_3), (49, \Delta_1), (50, \Delta_1),$ 

$(54, \Delta_1), (64, \Delta_1), (64, \Delta_2), (81, \Delta_1), (81, \Delta_2)$.

%
\end{thm}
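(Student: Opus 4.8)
The plan is to rule out infinitude of $\Gamma_3^\prime(X_\Delta(N),\Q)$ for every pair $(N,\Delta)$ in Table \ref{Gon >3 table} by combining Lemma \ref{N not in known set the it has map to elliptic curve} with a conductor-divisibility obstruction. First I would recall that all these curves satisfy the hypotheses of that lemma: by Theorem 2.4(iii) they have $\mathrm{Gon}(X_\Delta(N))>3$ and no degree $\leq 2$ map to an elliptic curve, and by the cusp remark preceding this subsection each $X_\Delta(N)$ carries a $\Q$-rational cusp, so $X_\Delta(N)(\Q)\ne\emptyset$. Thus the lemma applies verbatim.

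Next I would argue by contradiction. Suppose $\Gamma_3^\prime(X_\Delta(N),\Q)$ is infinite for one of the listed pairs. Lemma \ref{N not in known set the it has map to elliptic curve} then produces a $\Q$-rational degree $3$ morphism $\varphi\colon X_\Delta(N)\rightarrow E$ onto an elliptic curve $E/\Q$ of positive rank. The key structural step is to compose with the natural covering $X_1(N)\rightarrow X_\Delta(N)$ to obtain a nonconstant $\Q$-rational map $X_1(N)\rightarrow E$; since $E$ is an isogeny factor of $\mathrm{Jac}(X_1(N))$ arising from a weight-$2$ newform of level dividing $N$, the conductor $\mathrm{Cond}(E)$ must divide $N$. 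The arithmetic core is then the elementary but decisive observation that for each $N\in\{29,31,34,36,45,49,50,54,64,81\}$ there is simply no elliptic curve $E/\Q$ of positive rank whose conductor divides $N$. This can be verified by consulting the finite list of isogeny classes of conductor dividing $N$ (e.g. in the LMFDB or Cremona's tables) and checking that every such class has rank $0$; equivalently, one notes that $\mathrm{Jac}(X_1(N))(\Q)$ has rank $0$ for these $N$ by \cite[Theorem 3.1]{sporadic}, which already forbids any positive-rank elliptic quotient defined over $\Q$.

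The contradiction is then immediate: the hypothetical $E$ has positive $\Q$-rank, yet no such $E$ with $\mathrm{Cond}(E)\mid N$ exists, so the assumed degree-$3$ map cannot occur. Hence $\Gamma_3^\prime(X_\Delta(N),\Q)$ is finite for all the listed pairs. Since the values of $N$ appearing in Table \ref{Gon >3 table} are exactly those in the set $\{29,31,34,36,45,49,50,54,64,81\}$, the same rank-zero input disposes of every pair $(N,\Delta)$ uniformly, regardless of the particular subgroup $\Delta$.

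I do not expect a genuine obstacle here, as the argument is a clean application of the lemma followed by a table lookup; the only point requiring care is the passage from $E$ being a quotient of $\mathrm{Jac}(X_\Delta(N))$ to $\mathrm{Cond}(E)\mid N$, which rests on the fact that the covering $X_1(N)\rightarrow X_\Delta(N)$ is defined over $\Q$ so that $\mathrm{Jac}(X_\Delta(N))$ is a $\Q$-isogeny factor of $\mathrm{Jac}(X_1(N))$, whose simple factors all have conductor dividing $N$. Once this is in place, invoking the rank-zero computation from \cite[Theorem 3.1]{sporadic} finishes the proof.
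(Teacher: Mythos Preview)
Your argument is correct and matches the paper's proof essentially line for line: apply Lemma~\ref{N not in known set the it has map to elliptic curve}, pass to $X_1(N)$ to force $\mathrm{Cond}(E)\mid N$, and then invoke the rank-zero result of \cite[Theorem 3.1]{sporadic}. One small correction to your exposition: Table~\ref{Gon >3 table} also contains the levels $N=37$ and $N=43$, so your claim that the values of $N$ there are \emph{exactly} $\{29,31,34,36,45,49,50,54,64,81\}$ is inaccurate---those remaining cases are treated by a separate argument in the paper---but this does not affect the validity of your proof for the pairs actually listed in the theorem.
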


To deal with the remaining cases in Table \ref{Gon >3 table}, we recall the following proposition.

\begin{prop}\cite[Proposition 1.4]{Ste89}
\label{optimal curve for X1}
Let $\mathcal{A}$ be an isogeny class (over $\Q$) of modular elliptic curves of level $N$. Then there is a curve $E_1\in \mathcal{A}$ and a modular parametrization
$$\pi_1: X_1(N)\rightarrow E_1$$
such that: if $\pi: X_1(N)\rightarrow E$ is a parametrization of a curve $E\in \mathcal{A}$, then there is an isogeny $\beta : A_1 \rightarrow A$ which makes the following diagram commutative:
$$
\begin{tikzcd}
&X_1(N)\arrow[r, "\pi_1"] \arrow[d,"\pi"] & E_1 \arrow[dl,"\beta"]\\
&E
\end{tikzcd}.
$$ 
The curve $E_1$ is called the optimal curve for $X_1(N)$, and the mapping $\pi_1$ is called the optimal modular parametrization for $E_1$ and $X_1(N)$.
\end{prop}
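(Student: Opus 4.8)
The plan is to identify $E_1$ with the optimal ($f$-)quotient of the Jacobian $J_1(N):=\Jac(X_1(N))$ attached to the newform cutting out $\mathcal{A}$, and then to deduce the universal property from Hecke-equivariance of the induced maps on Jacobians. First I would attach to the isogeny class $\mathcal{A}$ its normalized weight-$2$ newform $f=\sum_{n\ge 1}a_nq^n$ of level $N$, whose Hecke eigenvalues $a_n\in\Z$ are the common ones of the curves in $\mathcal{A}$. Writing $\T$ for the Hecke algebra acting on $J_1(N)$ over $\Q$, the form $f$ gives a surjection $\lambda\colon\T\to\Z$, $T_n\mapsto a_n$, with kernel $I_f$. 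I would then set $B:=I_fJ_1(N)$, the abelian subvariety $\sum_{T\in I_f}T\,J_1(N)$ of $J_1(N)$; it is connected, being the image of a connected group under a homomorphism, and it is defined over $\Q$ because $J_1(N)$ and the Hecke operators are. Define $E_1:=J_1(N)/B$, a quotient over $\Q$.

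Next I would verify that $E_1$ is an elliptic curve lying in $\mathcal{A}$. By Eichler--Shimura the cotangent space of $E_1$ is the subspace of $S_2(\Gamma_1(N))$ annihilated by $I_f$, which by multiplicity one is the line spanned by $f$; hence $\dim E_1=1$, and since the $a_n$ are integers $E_1$ is an elliptic curve over $\Q$ with $L(E_1,s)=L(f,s)$, so $E_1\in\mathcal{A}$. For the parametrization I would fix the rational cusp $P_0\in X_1(N)(\Q)$ above $0$ (whose rationality was recalled above), form the Albanese map $\iota\colon X_1(N)\to J_1(N)$, $P\mapsto[P-P_0]$, over $\Q$, and compose with the quotient $q\colon J_1(N)\to E_1$ to obtain $\pi_1:=q\circ\iota$, which sends $P_0$ to the origin.

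For the universal property, let $\pi\colon X_1(N)\to E$ be any parametrization of some $E\in\mathcal{A}$, normalized so that $\pi(P_0)=O_E$, and let $\pi_*\colon J_1(N)\to\Jac(E)=E$ be the induced surjection, so that $\pi=\pi_*\circ\iota$. The crucial step is that $\pi_*$ intertwines $T_n$ with multiplication by $a_n$: in $\Hom(J_1(N),E)\otimes\Q$ the isogeny decomposition of $J_1(N)$ shows that only the $f$-isotypic factor maps nontrivially to $E$ (as $E$ is isogenous to $E_1$), and on that factor $T_n$ acts as $a_n$; thus $\pi_*\circ T_n-a_n\pi_*$ vanishes in $\Hom(J_1(N),E)\otimes\Q$ and hence in $\Hom(J_1(N),E)$, which is torsion-free. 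Consequently $\pi_*(B)=0$, so $\pi_*$ factors as $\pi_*=\beta\circ q$ for a unique homomorphism $\beta\colon E_1\to E$; since $\pi_*$ is surjective, $\beta$ is a surjection of elliptic curves fixing the origin, i.e. an isogeny, and $\pi=\pi_*\circ\iota=\beta\circ q\circ\iota=\beta\circ\pi_1$, which is the asserted commutative diagram (with $A_1,A$ being $E_1,E$).

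The step I expect to be the main obstacle is establishing the optimality built into the construction, namely that the kernel $B$ of $q$ is connected and that $E_1$ is genuinely one-dimensional and defined over $\Q$: this rests on the eigenspace description via Eichler--Shimura together with multiplicity one, and it is precisely what forces $\pi_*$ to factor through $E_1$ via an honest isogeny rather than merely up to a finite correction. The accompanying point requiring care is the Hecke-equivariance of an arbitrary $\pi_*$, for which the torsion-freeness of $\Hom(J_1(N),E)$ is the clean device.
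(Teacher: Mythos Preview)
The paper does not give its own proof of this proposition: it is quoted verbatim from Stevens \cite[Proposition~1.4]{Ste89} and used as a black box in the subsequent arguments. So there is no in-paper proof to compare your proposal against.

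That said, your sketch is essentially the standard argument (and the one Stevens gives): construct $E_1$ as the Shimura quotient $J_1(N)/I_fJ_1(N)$ attached to the newform $f$ of $\mathcal{A}$, check via Eichler--Shimura and multiplicity one that this is a one-dimensional quotient over $\Q$ with the right $L$-function, and then factor an arbitrary $\pi_*:J_1(N)\to E$ through $q:J_1(N)\to E_1$ by showing $\pi_*$ kills $I_fJ_1(N)$. Your justification of the last step via the isogeny decomposition and torsion-freeness of $\Hom(J_1(N),E)$ is clean. Two small points you may want to make explicit when writing it up: (i) $\pi_1$ is non-constant because the Albanese image of $X_1(N)$ generates $J_1(N)$, so composing with the nonzero quotient map $q$ cannot collapse it; and (ii) the assertion that the cotangent space of $E_1$ is exactly the line $\C f$ uses that $f$ is a \emph{newform of level $N$}, which is what rules out oldform contributions to the $I_f$-kernel in $S_2(\Gamma_1(N))$.
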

We now apply Proposition \ref{optimal curve for X1} to get the following result.
\begin{thm}
The sets $\Gamma_3^\prime(X_{\Delta_1}(37), \Q), \Gamma_3^\prime(X_{\Delta_1}(43),\Q)$ and $\Gamma_3^\prime(X_{\Delta_2}(43), \Q)$ are finite.
\end{thm}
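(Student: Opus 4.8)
The plan is to handle the three remaining pairs $(37,\Delta_1), (43,\Delta_1), (43,\Delta_2)$ in Table \ref{Gon >3 table}, where the crude ``no elliptic curve of the right conductor'' argument fails because $N=37$ and $N=43$ do admit elliptic curves of positive $\Q$-rank with conductor dividing $N$. For these levels, the isogeny class $\mathcal{A}$ of positive-rank modular elliptic curves of level $N$ is nontrivial, so Lemma \ref{N not in known set the it has map to elliptic curve} only tells us that if $\Gamma_3^\prime(X_\Delta(N),\Q)$ is infinite, there is a $\Q$-rational degree $3$ map $\varphi:X_\Delta(N)\to E$ to some $E\in\mathcal{A}$. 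The goal is to rule this out by a finer analysis of such a map, and this is where Proposition \ref{optimal curve for X1} enters.

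First I would observe that a degree $3$ map $X_\Delta(N)\to E$ composed with $X_1(N)\to X_\Delta(N)$ gives a parametrization $\pi:X_1(N)\to E$ of a curve in $\mathcal{A}$. By Proposition \ref{optimal curve for X1} this factors as $\pi=\beta\circ\pi_1$ through the optimal curve $E_1$ and an isogeny $\beta:E_1\to E$. The key idea is to compare degrees and exploit the factorization $X_1(N)\to X_\Delta(N)\to E$. Writing $\deg(X_1(N)\to X_\Delta(N))=[\Gamma_\Delta(N):\pm\Gamma_1(N)]$, which I would read off from $\Delta$, the map $\pi$ has degree equal to this index times $3$. On the other hand the optimal parametrization $\pi_1$ has a specific (computable) degree, known for these small levels, and $\deg\pi=\deg\beta\cdot\deg\pi_1$. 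The plan is to show that the arithmetic constraints these impose are incompatible: either $3$ does not divide the quotient $\deg\pi/\deg\pi_1$ in the required way, or the map $X_\Delta(N)\to E$ cannot exist because the composite $X_1(N)\to X_\Delta(N)$ does not factor appropriately through the Galois covering structure. Concretely, I would study the ramification of the candidate map $X_\Delta(N)\to E$, using that $E$ has no nontrivial automorphisms relevant here and that $X_\Delta(N)\to E$ must be compatible with the modular parametrization, to force a contradiction with the degree or the genus via Riemann--Hurwitz.

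More precisely, the sharper route is to use that any $\Q$-rational map $X_\Delta(N)\to E$ of degree $3$ to an elliptic curve, when pulled back to $X_1(N)$, must agree with a scalar multiple of the optimal parametrization up to the isogeny $\beta$; comparing the induced maps on differentials (the pullback of the invariant differential on $E$ is, up to scalar, the newform $f_E$), I would check whether the relevant eigenform actually lies in the image of $S_2(\Gamma_\Delta(N))\hookrightarrow S_2(\Gamma_1(N))$ with the correct multiplicity to descend a degree-$3$ map. The decisive computation is to verify in \texttt{MAGMA}/\texttt{SAGE} that the factorization forced by Proposition \ref{optimal curve for X1} would require $X_\Delta(N)\to E$ to have degree strictly larger than $3$ (equivalently, that $\deg\pi$ exceeds $3\cdot\deg(X_1(N)\to X_\Delta(N))$), contradicting the assumed degree.

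The hard part will be the ramification/degree bookkeeping through the tower $X_1(N)\to X_\Delta(N)\to E$: one must keep careful track of how the index $[\Gamma_\Delta(N):\pm\Gamma_1(N)]$, the optimal degree $\deg\pi_1$, and the isogeny degree $\deg\beta$ interact, and ensure no degree-$3$ factor can slip through by choosing a non-optimal $E$ within the isogeny class. I expect the cleanest way to close this is to enumerate, for each of $(37,\Delta_1),(43,\Delta_1),(43,\Delta_2)$, the finitely many elliptic curves in $\mathcal{A}$ and the finitely many possible $\beta$, and to show via the commutative diagram of Proposition \ref{optimal curve for X1} that no resulting $\pi$ descends to a degree-$3$ map on $X_\Delta(N)$; the contradiction then yields that $\Gamma_3^\prime(X_\Delta(N),\Q)$ is finite in each case.
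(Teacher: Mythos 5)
You have correctly located the two levers: Lemma \ref{N not in known set the it has map to elliptic curve} reduces the problem to ruling out a $\Q$-rational degree $3$ map $f:X_\Delta(N)\to E$ with $E$ the unique positive-rank elliptic curve of conductor dividing $N$ (namely $37a1$, resp.\ $43a1$), and Proposition \ref{optimal curve for X1} is the right tool for analysing the composite $X_1(N)\to X_\Delta(N)\to E$. But the plan as written does not close. The mechanism that actually produces a contradiction is not that ``$E$ has no nontrivial automorphisms'' but that $E$ is alone in its isogeny class with $\End(E)\cong\Z$, so every isogeny $\beta:E\to E$ appearing in Stevens' diagram is multiplication by some $m$ and hence has \emph{square} degree; your proposal never isolates this, and the alternatives you float (divisibility of $\deg\pi/\deg\pi_1$ by $3$, Riemann--Hurwitz, whether $f_E$ lies in $S_2(\Gamma_\Delta(N))$ --- it always does, since $f_E$ has trivial nebentypus) are either not the operative constraints or give nothing. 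More importantly, a single parametrization yields no contradiction: from the hypothetical composite $\varphi$ of degree $6$ (resp.\ $9$, $21$) one only learns that $\deg\pi_1$ divides that number with square quotient, which is perfectly consistent. One needs a \emph{second}, independently known parametrization to play it off against, and your fallback --- that $\deg\pi_1$ is ``computable'' for these levels --- is exactly the point requiring an argument, since the $X_1$-optimal degree is not the standard $X_0$-modular degree.

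The paper supplies the missing second parametrization: $X_0^+(N)\cong E$ for $N=37,43$, so the natural projections $X_1(N)\to X_\Delta(N)\to X_0(N)\to X_0^+(N)\cong E$ give a parametrization of explicitly known degree ($36$ for $(37,\Delta_1)$, $42$ for both subgroups at $43$). Proposition \ref{optimal curve for X1} forces both parametrizations to factor through $\pi_1$ with square cofactor, and no positive integer divides both $6$ and $36$ (resp.\ both $9$ and $42$, both $21$ and $42$) with square quotients; equivalently, as the paper phrases it, the first factorization shows $\varphi$ itself is optimal, and the second then demands an isogeny of non-square (or non-integral) degree, a contradiction. Finally, your proposed decisive step --- ``verify that $\deg\pi$ exceeds $3\cdot\deg(X_1(N)\to X_\Delta(N))$'' --- is circular, since $\deg\pi=\deg f\cdot\deg(X_1(N)\to X_\Delta(N))$ by construction, so that inequality is just a restatement of $\deg f>3$, which is the conclusion sought rather than something one can compute.
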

\begin{proof}
First we consider the curve $X_{\Delta_1}(37)$. Note that $37a1$ is the only elliptic curve with positive $\Q$-rank whose conductor divides $37$. If the set $\Gamma_3^\prime(X_{\Delta_1}(37), \Q)$  is infinite, then by Lemma \ref{N not in known set the it has map to elliptic curve} there exists a $\Q$-rational degree $3$ mapping 
$f:X_{\Delta_1}(37)\rightarrow 37a1.$

Consequently, we get a degree $6$ mapping 
$$\begin{tikzcd}
&\varphi: X_1(37)\arrow[r, "\phi"', "\deg  2"''] &X_{\Delta_1}(37)\arrow[r, "f"', "\deg  3"''] & 37a1 
\end{tikzcd}.
$$ 

Since $37a1$ is the only elliptic curve in its isogeny class,
it is the optimal curve for $X_1(37)$. Let $\pi_1: X_1(37)\rightarrow 37a1$ be the optimal modular parametrization.
By Proposition \ref{optimal curve for X1}, there exists an isogeny $\beta_1:37a1\rightarrow 37a1$ such that the following diagram commutes
\begin{equation}
\label{1st commutative diagram for 37 delta 1}
\begin{tikzcd}
X_1(37) \arrow[d, "\phi", "\deg 2"'] \arrow[r, "\pi_1"]  & 37a1\arrow[ddl, "\beta_1"]\\
X_{\Delta_1}(37) \arrow[d, "f", "\deg 3"']\\
37a1
\end{tikzcd}
\end{equation}
Since $\End(37a1)\cong \Z$, the isogeny $\beta_1$ is a multiplication by $m$-mapping (for some $m\in \N$). Consequently, $\deg(\beta_1)$ is a square.
By considering the degrees of the mappings in \eqref{1st commutative diagram for 37 delta 1}, we get $\deg (\beta_1) = \frac{6}{\deg (\pi_1)}$.  Which forces that $\deg(\beta_1)=1$, i.e $\beta_1$ is an isomorphism.
Hence without loss of generality, we can assume that the mapping   
$$\begin{tikzcd}
&\varphi: X_1(37)\arrow[r, "\phi"', "\deg  2"''] &X_{\Delta_1}(37)\arrow[r, "\deg 3","f"'] & 37a1 
\end{tikzcd}
$$ 
is the optimal modular parametrization for $37a1$ and $X_1(37)$.

{
On the other hand, we know that $X_0^+(37)\cong 37a1$, and there is a $\Q$-rational mapping $g_1:X_{\Delta_1}(37)\rightarrow X_0^+(37)$ of degree $18$, which is the composition of the natural projection maps  
\begin{equation*}
\begin{tikzcd}
&X_{\Delta_1}(37)\arrow[r, "\deg  9"''] &X_{0}(37)\arrow[r, "w_{37}"', "\deg  2"''] & X_0^+(37)\cong 37a1 
\end{tikzcd}. 
\end{equation*}
Hence we obtain the mappings
$$\begin{tikzcd}
&X_1(37)\arrow[r,"\phi"', "\deg  2"''] &X_{\Delta_1}(37)\arrow[r, "g_1"', "\deg 18"''] & X_0^+(37)\cong 37a1 
\end{tikzcd}.
$$


Since the mapping $\varphi=f\circ \phi$ is the optimal modular parametrization for $37a1$ and $X_1(37)$, by Proposition \ref{optimal curve for X1} there exists an isogeny $\beta:37a1\rightarrow 37a1$ such that the following diagram commutes
\begin{equation}
\label{commutative diagram for 37 delta 1}
\begin{tikzcd}
&X_1(37)\arrow[r, "\phi"] &X_{\Delta_1}(37)\arrow[r, "f"] \arrow[d,"g_1"] & 37a1 \arrow[dl,"\beta"]\\
& &X_0^+(37)\cong 37a1
\end{tikzcd}.
\end{equation}
Since $\End(37a1)\cong \Z$, the isogeny $\beta$ is a multiplication by $m$-mapping (for some $m\in \N$). Consequently, $\deg(\beta)$ is a square. On the other hand, 
considering the degrees of the mappings in \eqref{commutative diagram for 37 delta 1}, we get $\deg(\beta)=6$, which is not a square. This is a contradiction.
Hence there is no $\Q$-rational degree $3$ mapping between $X_{\Delta_1}(37)$ and $37a1$.
Therefore the set $\Gamma_3^\prime(X_{\Delta_1}(37), \Q)$ is finite.
}

A similar argument will work for the cases $X_{\Delta_1}(43)$ and $X_{\Delta_2}(43)$ (note that the mappings $X_{\Delta_1}(43)\rightarrow X_0^+(43)$ and $X_{\Delta_2}(43)\rightarrow X_0^+(43)$ has degree $14$ and $6$ respectively). The details are left to the reader.
\end{proof}

\subsubsection{The curve $X_{\Delta_2}(37)$}
\label{37 delta 2 section}
After all the above discussions, we are only left to check the curve $X_{\Delta_2}(37)$. 
Note that $37a1$ is the only elliptic curve with positive $\Q$-rank and conductor divides $37$. 
By Lemma \ref{N not in known set the it has map to elliptic curve}, if $X_{\Delta_2}(37)$ has infinitely many cubic points over $\Q$, then there exists a $\Q$-rational degree $3$ mapping 
$$f:X_{\Delta_2}(37)\rightarrow 37a1.$$
Hence we get a degree $9$ mapping 
$$\begin{tikzcd}
&\varphi: X_1(37)\arrow[r, "\phi"', "\deg   3"''] &X_{\Delta_2}(37)\arrow[r, "f"', "\deg  3"''] & 37a1 
\end{tikzcd}.
$$ 

Since $37a1$ is the only elliptic curve in its isogeny class,
it is the optimal curve for $X_1(37)$. Let $\pi_1: X_1(37)\rightarrow 37a1$ be the optimal modular parametrization.
By Proposition \ref{optimal curve for X1}, there exists an isogeny $\beta_1:37a1\rightarrow 37a1$ such that the following diagram commutes.
$$\begin{tikzcd}
X_1(37) \arrow[d, "\deg 3"'] \arrow[r, "\pi_1"]  & 37a1\arrow[ddl, "\beta_1"]\\
X_{\Delta_2}(37) \arrow[d, "\deg 3"']\\
37a1
\end{tikzcd}
$$
By considering the degrees of the above mappings, we have $\deg (\beta_1) = \frac{9}{\deg (\pi_1)}$. Since $\End(37a1)\cong \Z$, the isogeny $\beta_1$ is a multiplication by $m$-mapping, for some $m\in \N$. Consequently, $\deg(\beta_1)$ is a square. Since $\pi_1$ is not an isomorphism, we conclude that $\beta_1$ is an isomorphism (i.e $\deg(\beta)=1$).
Hence without loss of generality, we can assume that $\pi_1=\varphi$, i.e. the mapping   
$$\begin{tikzcd}
&\varphi: X_1(37)\arrow[r, "\phi"', "\deg  3"''] &X_{\Delta_2}(37)\arrow[r, "f"] & 37a1 
\end{tikzcd}
$$ 
is the optimal modular parametrization for $37a1$ and $X_1(37)$.

On the other hand, we know that $X_0^+(37)\cong 37a1$, and there is a natural $\Q$-rational mapping $g:X_{\Delta_2}(37)\rightarrow X_0^+(37)$ of degree $12$, which is the composition of the natural projection maps  
\begin{equation}
\begin{tikzcd}
&X_{\Delta_2}(37)\arrow[r,"\pi"', "\deg  6"''] &X_{0}(37)\arrow[r, "w_{37}"', "\deg  2"''] & X_0^+(37)\cong 37a1 
\end{tikzcd}. 
\end{equation}
Consequently, we get the mappings 
$$\begin{tikzcd}
&X_1(37)\arrow[r,"\phi"', "\deg  3"''] &X_{\Delta_2}(37)\arrow[r, "g"] & X_0^+(37)\cong 37a1 
\end{tikzcd}.
$$


By Proposition \ref{optimal curve for X1}, there exists an isogeny $\alpha:37a1\rightarrow 37a1$ such that the following diagram commutes.

\begin{equation}
\label{final commutative diagram for 37}
\begin{tikzcd}
&X_1(37)\arrow[r, "\deg  3"'', "\phi"']&X_{\Delta_2}(37) \arrow[d, "\deg 6"', "\pi"] \arrow[r, "\deg 3"'', "f"']  & 37a1\arrow[ddl, "\alpha"]\\
&&X_{0}(37) \arrow[d, "\deg 2"', "w_{37}"]\\
&&X_0^+(37)
\end{tikzcd}
\end{equation}


Since $\mathrm{End}(37a1)\cong \Z$, considering the degrees of the mappings in \eqref{final commutative diagram for 37} we conclude that the isogeny $\alpha$ is the multiplication by $2$-mapping. We now study the ramifications of the mappings in \eqref{final commutative diagram for 37}.

First consider the mapping $g=\pi \circ w_{37}: X_{\Delta_2}(37)\rightarrow X_0^+(37)\cong 37a1$. 
The degree $2$ mapping $w_{37}: X_0(37)\rightarrow X_0^+(37)$ has two distinct fixed points (cf. \cite[Page 27]{MaSD}), say $w_1$ and $w_2$.
Let $\bar{w}_1$ and $\bar{w}_2$ denote the images of $w_1$ and $w_2$, respectively, under the mapping $w_{37}$.
The points of $X_{\Delta_2}(37)$ lying above the points $w_1$ and $w_2$ do not ramify in the mapping $\pi: X_{\Delta_2}(37)\rightarrow X_0(37)$ (cf. \cite[Lemma 2.4]{JKS20}).
Thus all the points of $X_{\Delta_2}(37)$ lying above the points $\bar{w}_1$ and $\bar{w}_2$ have ramification index $2$ in the mapping $g$. Therefore, by ramification formula (cf. \cite[Proposition 2.6]{Sil09}), there exist exactly six points $P_1, P_2,\ldots,P_6\in X_{\Delta_2}(37)$ such that 
$g(P_i)=\bar{w}_1, \ \mathrm{for} \ 1\leq i \leq 6.$
Note that $e_g(P_i)=2$ for $1\leq i \leq 6$, where $e_g(P_i)$ denotes the ramification index of $P_i$ in the mapping $g$.

%

On the other hand, from the commutative diagram \eqref{final commutative diagram for 37}, we have $g=\alpha\circ f$.
Since $\alpha$ is the multiplication by $2$-mapping, it is unramified and $\mathrm{degree}(\alpha)=4$.
By ramification formula, there exist exactly four points $x_1, x_2, x_3,x_4\in 37a1$ such that $\alpha(x_i)=\bar{w}_1$. Note that $e_\alpha(x_i)=1$ for $1\leq i \leq 4$, where $e_\alpha(x_i)$ denotes the ramification index of $x_i$ in the mapping $\alpha$. For the mapping $f$, by ramification formula, we get
\begin{equation}
\label{ramification formula for x1}
\sum_{P\in f^{-1}(x_1)}e_f(P)=3,
\end{equation}
where $e_f(P)$ denotes the ramification index of $P$ in the mapping $f$.
Since $g=\alpha\circ f$, we have $f^{-1}(x_1)\subseteq \{P_1, P_2, \ldots, P_6\}$. Consequently \eqref{ramification formula for x1} implies that there exists $P_j$ for some $1\leq j \leq 6$ such that $f(P_j)=x_1$ and $e_f(P_j)\neq 2$. Therefore $e_g(P_j)=e_f(P_j)e_\alpha(x_1)=e_f(P_j)\ne 2$.
This is a contradiction since $e_g(P_i)=2$ for $1\leq i \leq 6$.

Hence such mapping $f$ does not exist and we conclude that:
\begin{thm}
\label{Delta 2 37}
The set $\Gamma_3^\prime(X_{\Delta_2}(37), \Q)$ is finite.
\end{thm}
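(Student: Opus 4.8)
The plan is to argue by contradiction, following the same template used for $X_{\Delta_1}(37)$ and $X_{\Delta_i}(43)$ but pushing it one step further. Suppose $\Gamma_3^\prime(X_{\Delta_2}(37),\Q)$ is infinite. Since $X_{\Delta_2}(37)$ is not bielliptic, has $\mathrm{Gon}>3$, and carries a $\Q$-rational cusp, Lemma \ref{N not in known set the it has map to elliptic curve} produces a $\Q$-rational degree-$3$ map $f:X_{\Delta_2}(37)\to E$ with $E$ of positive $\Q$-rank. Composing with the projection $X_1(37)\to X_{\Delta_2}(37)$ shows $\cond(E)\mid 37$, and as $37a1$ is the unique positive-rank curve of conductor dividing $37$, we must have $E=37a1$. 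The entire goal is then to show that no such $f$ exists.

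First I would pin down the optimal parametrization. Writing $\phi:X_1(37)\to X_{\Delta_2}(37)$ for the degree-$3$ projection and $\varphi=f\circ\phi$ (degree $9$), I invoke Stevens' Proposition \ref{optimal curve for X1}: since $37a1$ is alone in its isogeny class it is the optimal curve, so there is an isogeny $\beta_1:37a1\to 37a1$ with $\varphi=\beta_1\circ\pi_1$. Because $\End(37a1)\cong\Z$, every such $\beta_1$ is a multiplication-by-$m$ map and $\deg(\beta_1)$ is a perfect square; from $9=\deg(\beta_1)\deg(\pi_1)$ together with the fact that $\pi_1$ is not an isomorphism, I conclude $\deg(\beta_1)=1$. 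Hence, after identification, $\varphi$ itself is the optimal modular parametrization for $37a1$ and $X_1(37)$.

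The second ingredient is the independent map $g:X_{\Delta_2}(37)\to X_0(37)\to X_0^+(37)\cong 37a1$, the composite of the degree-$6$ projection $\pi$ with the degree-$2$ Atkin–Lehner quotient $w_{37}$, so $\deg(g)=12$. Applying Stevens again to $g\circ\phi$ and using optimality of $\varphi$ gives an isogeny $\alpha$ with $g=\alpha\circ f$, and the degree count $12=3\cdot\deg(\alpha)$ forces $\deg(\alpha)=4$, whence $\alpha=[2]$ is \emph{unramified}. This is exactly where the cruder argument must fail: for $X_{\Delta_1}(37)$ the analogous isogeny had degree $6$, a non-square, yielding an immediate contradiction, but here $\deg(\alpha)=4$ is a perfect square, so the parity-of-degree trick says nothing and I am forced to extract finer information from ramification.

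The crux — and the step I expect to carry all the real content — is a ramification comparison over the fixed points of $w_{37}$. The involution $w_{37}$ on $X_0(37)$ has exactly two fixed points $w_1,w_2$, and by \cite[Lemma 2.4]{JKS20} the fibres of $\pi$ over them are unramified; therefore over $\bar w_1:=w_{37}(w_1)$ the map $g$ has exactly six preimages $P_1,\dots,P_6$, each with $e_g(P_i)=2$, the factor $2$ coming entirely from the ramified quotient. On the other hand, factoring $g=\alpha\circ f$ with $\alpha=[2]$ unramified, a preimage $x_1\in 37a1$ of $\bar w_1$ has $e_\alpha(x_1)=1$, while the degree-$3$ map satisfies $\sum_{P\in f^{-1}(x_1)}e_f(P)=3$. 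Since $f^{-1}(x_1)\subseteq\{P_1,\dots,P_6\}$ and this sum is odd, some $P_j$ has $e_f(P_j)\neq 2$, giving $e_g(P_j)=e_f(P_j)e_\alpha(x_1)\neq 2$, which contradicts $e_g(P_j)=2$. The main obstacle is engineering precisely this parity clash: the argument works only because the extra degree between $g$ and $f$ is forced to be the unramified $[2]$, so all ramification of $g$ at $\bar w_1$ must already be visible in $f$, where a degree-$3$ fibre cannot carry a uniformly even index. Hence $f$ cannot exist and $\Gamma_3^\prime(X_{\Delta_2}(37),\Q)$ is finite.
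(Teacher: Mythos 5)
Your proposal is correct and follows essentially the same route as the paper: pinning down $\varphi=f\circ\phi$ as the optimal parametrization via Stevens' result, deducing $g=\alpha\circ f$ with $\alpha=[2]$ unramified from the degree count $12=3\cdot\deg(\alpha)$, and then deriving the contradiction from the parity clash between the uniformly even ramification of $g$ over $\bar w_1$ and the odd fibre sum $\sum_{P\in f^{-1}(x_1)}e_f(P)=3$. The only difference is expository: you explicitly flag why the square-degree obstruction used for $X_{\Delta_1}(37)$ fails here, which the paper leaves implicit.
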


\section*{Acknowledgments}
The author is deeply indebted to Prof. Francesc Bars for his comments and several helpful discussions.
The author thanks the anonymous referee for providing many valuable comments.
The author also thanks the Dept. of Atomic Energy, Govt of India for the financial support provided to carry out this research work at Harish-Chandra Research Institute.

\section{Appendix}
\begin{longtable}{|c|c|c|}
\caption{$X_\Delta(N)$ with $\mathrm{Gon}(X_\Delta(N))\leq 2$}\label{Gon leq 3 table}
\\ \hline
$N$ & $\{\pm 1\} \subsetneq \Delta \subsetneq (\Z/N\Z)^\times$ &
$g_{X_\Delta(N)}$
 \\ \hline
 $1\le N \le 12$ & No non trivial choice for $\Delta$ & $-$
 \\ \hline
  $13$ & $\Delta_1=\{ \pm 1, \pm 5\}$ & 0
 \\ \hline
 $13$ & $\Delta_2=\{ \pm 1, \pm 3, \pm 4 \}$ & 0
 \\ \hline
 $14$ & No non trivial choice for $\Delta$ & $-$
 \\ \hline
 $15$ & $\Delta_1=\{ \pm 1, \pm 4\}$ & 1
 \\ \hline
 $16$ & $\Delta_1=\{ \pm 1, \pm 7\}$ & 0
 \\ \hline
 $17$ & $\Delta_1=\{\pm 1,\pm 4\}$ & $1$
 \\ \hline
 $17$ & $\Delta_2=\{\pm 1,\pm 2,\pm 4,\pm 8\}$ & $1$
 \\ \hline
 $18$ & No non trivial choice for $\Delta$ & $-$
 \\ \hline
 $19$ & $\Delta_1=\{\pm 1,\pm 7,\pm 8\}$ & $1$
 \\ \hline
  $20$ & $\Delta_1=\{ \pm 1, \pm 9\}$ & 1
 \\ \hline
 $21$ & $\Delta_1=\{\pm 1,\pm 8\}$ & $3$
 \\ \hline
 $21$ & $\Delta_2=\{\pm 1,\pm 4,\pm 5\}$ & $1$
 \\ \hline
 $22$ & No non trivial choice for $\Delta$ & $-$
 \\ \hline
 $23$ & No non trivial choice for $\Delta$ & $-$
 \\ \hline
 $24$ & $\Delta_3=\{\pm 1,\pm 11\}$ & $1$
 \\ \hline
 $25$ & $\Delta_2=\{\pm 1,\pm 4,\pm 6,\pm 9,\pm 11\}$ & $0$
 \\ \hline
 $27$ & $\Delta_1=\{\pm 1,\pm 8,\pm 10\}$ & $1$
 \\ \hline
 $32$ & $\Delta_2=\{\pm 1,\pm 7,\pm 9,\pm 15\}$ & $1$
 \\ \hline

\end{longtable}
\begin{longtable}{|c|c|c|}
\caption{$X_\Delta(N)$ with $\mathrm{Gon}(X_\Delta(N))= 3$}\label{Gon =3 table}
 \\ \hline
 $N$ & $\{\pm 1\} \subsetneq \Delta \subsetneq (\Z/N\Z)^\times$ &
$g_{X_\Delta(N)}$
\\ \hline
 $24$ & $\Delta_1=\{\pm 1,\pm 5\}$ & $3$
 \\ \hline
 $24$ & $\Delta_2=\{\pm 1,\pm 7\}$ & $3$
 \\ \hline
  $25$ & $\Delta_1=\{\pm 1,\pm 7\}$ & $4$
 \\ \hline
  $26$ & $\Delta_1=\{\pm 1,\pm 5\}$ & $4$
 \\ \hline
 $26$ & $\Delta_2=\{\pm 1,\pm 3,\pm 9\}$ & $4$
 \\ \hline
  $28$ & $\Delta_1=\{\pm 1,\pm 13\}$ & $4$
 \\ \hline
 $28$ & $\Delta_2=\{\pm 1,\pm 3,\pm 9\}$ & $4$
 \\ \hline
 $29$ & $\Delta_2=\{\pm 1,\pm 4,\pm 5,\pm 6,\pm 7,\pm 9,\pm 13\}$ & $4$
 \\ \hline
  $36$ & $\Delta_2=\{\pm 1,\pm 11,\pm 13\}$ & $3$
 \\ \hline
  $37$ & $\Delta_3=\{\pm 1,\pm 6,\pm 8,\pm 10,\pm 11,\pm 14\}$ & $4$
 \\ \hline
 $37$ & $\Delta_4=\{\pm 1,\pm 3,\pm 4,\pm 7,\pm 9,\pm 10,\pm 11,\pm 12,\pm 16\} $ & $4$
 \\ \hline
  $49$ & $\Delta_2=\{\pm 1,\pm 6,\pm 8,\pm 13,\pm 15,\pm 20,\pm 22\}$ & $3$
 \\ \hline
 $50$ & $\Delta_2=\{\pm 1,\pm 9,\pm 11,\pm 19,\pm 21\}$ & $4$
 \\ \hline
\end{longtable}
\begin{longtable}{|c|c|c|}
\caption{$X_\Delta(N)$ with $\mathrm{Gon}(X_\Delta(N))> 3$ and bielliptic}\label{Gon >3 table and bielliptic}
\\ \hline
$N$ & $\{\pm 1\} \subsetneq \Delta \subsetneq (\Z/N\Z)^\times$ &
$g_{X_\Delta(N)}$
 \\ \hline
  $32$ & $\Delta_1=\{\pm 1,\pm 15\}$ & $5$
 \\ \hline
  $34$ & $\Delta_2=\{\pm 1,\pm 9,\pm 13,\pm 15\}$ & $5$
 \\ \hline
  $45$ & $\Delta_4=\{\pm 1,\pm 4,\pm 11,\pm 14,\pm 16,\pm 19\}$ & $5$
 \\ \hline
  $64$ & $\Delta_3=\{\pm 1,\pm 7,\pm 9,\pm 15,\pm 17,\pm 23,\pm 25,\pm 31\}$ & $5$
 \\ \hline
 \end{longtable}
\begin{center}
\begin{longtable}{|c|c|c|}
\caption{$X_\Delta(N)$ with $\mathrm{Gon}(X_\Delta(N))> 3$ and not bielliptic}\label{Gon >3 table}
\\ \hline
$N$ & $\{\pm 1\} \subsetneq \Delta \subsetneq (\Z/N\Z)^\times$ &
$g_{X_\Delta(N)}$
 \\ \hline
 $29$ & $\Delta_1=\{\pm 1,\pm 12\}$ & $8$
 \\ \hline
 $31$ & $\Delta_1=\{\pm 1,\pm 5,\pm 6\}$ & $6$
 \\ \hline
 $31$ & $\Delta_2=\{\pm 1,\pm 2,\pm 4,\pm 8,\pm 15\}$ & $6$
 \\ \hline
 $34$ & $\Delta_1=\{\pm 1,\pm 13\}$ & $9$
 \\ \hline
 $36$ & $\Delta_1=\{\pm 1,\pm 17\}$ & $7$
 \\ \hline
 $37$ & $\Delta_1=\{\pm 1,\pm 6\}$ & $16$
 \\ \hline
 $37$ & $\Delta_2=\{\pm 1,\pm 10,\pm 11\}$ & $10$
 \\ \hline
 $43$ & $\Delta_1=\{\pm 1,\pm 6,\pm 7\}$ & $15$
 \\ \hline
 $43$ & $\Delta_2=\{\pm 1,\pm 2,\pm 4,\pm 8,\pm 11,\pm 16,\pm 21\}$ & $9$
 \\ \hline
 $45$ & $\Delta_1=\{\pm 1,\pm 19\}$ & $21$
 \\ \hline
 $45$ & $\Delta_2=\{\pm 1,\pm 14,\pm 16\}$ & $9$
 \\ \hline
 $45$ & $\Delta_3=\{\pm 1,\pm 8,\pm 17,\pm 19\}$ & $11$
 \\ \hline
  $49$ & $\Delta_1=\{\pm 1,\pm 18,\pm 19\}$ & $19$
 \\ \hline
 $50$ & $\Delta_1=\{\pm 1,\pm 7\}$ & $22$
 \\ \hline
 $54$ & $\Delta_1=\{\pm 1,\pm 17,\pm 19\}$ & $10$
 \\ \hline
  $64$ & $\Delta_1=\{\pm 1,\pm 31\}$ & $37$
 \\ \hline
 $64$ & $\Delta_2=\{\pm 1,\pm 15,\pm 17,\pm 31\}$ & $13$
 \\ \hline
  $81$ & $\Delta_1=\{\pm 1,\pm 26,\pm 28\}$ & $46$
 \\ \hline
 $81$ & $\Delta_2=\{\pm 1,\pm 8,\pm 10,\pm 17,\pm 19,\pm 26,\pm 28,\pm 35,\pm 37\}$ & $10$
 \\ \hline
\end{longtable}
\end{center}


\begin{center}
\begin{longtable}{|c|c|}
\caption{Models and Quadratic surface for $X_\Delta(N)$ with $g_{X_\Delta(N)}=4$}\label{quadratic surface for genus 4 curves}
\\ \hline
Curve & Petri's model and Quadratic surface
\\ \hline
$X_{\Delta_1}(25)$ &$
\begin{cases}
x^2 w - x w^2 - y^3 + y^2 z - 3 y z w + 3 y w^2 + z^2 w - 2 z w^2 + w^3,\\
x z - y^2 + y w - 2 z w + w^2
\end{cases}
$\\
& Diagonal form: $x^2 + y^2 - z^2 - 5w^2$, lies on a ruled surface over $\Q(\sqrt{5})$
\\ \hline
$X_{\Delta_1}(26)$ &$
\begin{cases}
x^2 z - x y^2 - x z^2 + 2 y^2 z - 2 y z^2 + y z w - y w^2 + z^3 - 2 z^2 w + z w^2,\\
x w - y z + z^2
\end{cases}
$\\
& Diagonal form: $x^2 + y^2 - z^2 - w^2$, lies on a ruled surface over $\Q$
\\ \hline
$X_{\Delta_2}(26)$ &$
\begin{cases}
x^2 z - x y^2 - x z^2 + 3 y^2 z - 6 y z^2 - 2 y z w - y w^2 + 4 z^3 + z^2 w +z w^2,\\
x w - y z + z^2 - z w
\end{cases}
$\\
& Diagonal form: $x^2 + 5y^2 - 5z^2 - w^2$, lies on a ruled surface over $\Q$
\\ \hline
$X_{\Delta_1}(28)$ & $
\begin{cases}
x^2 w - 4 x w^2 - y^2 z - 2 z^2 w + 5 z w^2 + w^3,\\
x z - x w - y^2 - z^2 - z w + 3 w^2
\end{cases}
$\\
& Diagonal form: $x^2 - y^2 - z^2 + w^2$, lies on a ruled surface over $\Q$
\\ \hline
$X_{\Delta_2}(28)$ & $
\begin{cases}
x^2 w - x y w + 3 x w^2 - y z^2 + 4 y z w - 4 y w^2 - 6 z w^2 + 2 w^3,\\
x z + x w - y^2 + y z + y w - z^2 - 2 z w - w^2
\end{cases}
$\\
& Diagonal form: $3x^2 - 3y^2 - z^2$, lies on a quadratic cone over $\Q$
\\ \hline
$X_{\Delta_2}(29)$ & $
\begin{cases}
x^2 z - 3 x y z - x z^2 - y^3 + y^2 z + y z^2 + 5 y z w - y w^2 + 4 z^3 -5 z^2 w - 3 z w^2,\\
x w - y z + y w + z^2 - 3 z w - 2 w^2
\end{cases}
$\\
& Diagonal form: $4x^2 + 2y^2 - 4z^2 - 2w^2$, lies on a ruled surface over $\Q$
\\ \hline
$X_{\Delta_3}(37)$ & $
\begin{cases}
 x^2 z - x y^2 - x y z + y^3 - y^2 z + 3 y^2 w - y z^2 + y z w + y w^2 + z^3 -5 z^2 w + 8 z w^2 - 4 w^3,\\
x w - y z + y w - z^2 + 2 z w - w^2
\end{cases}
$\\
&Diagonal form: $x^2 + y^2 - z^2 - w^2$, lies on a ruled surface over $\Q$
\\ \hline
$X_{\Delta_4}(37)$ & $
\begin{cases}
2 x^2 w - 5 x w^2 - 2 y^3 + 2 y^2 z - 2 y z^2 + 6 y z w - 6 y w^2 - 3 z^3 +8 z^2 w - 9 z w^2 + 6 w^3,\\
x^2 + y^2 - z^2
\end{cases}
$\\
&Diagonal form: $x^2 + y^2 - z^2$, lies on a quadratic cone over $\Q$
\\ \hline
$X_{\Delta_2}(50)$ & $
\begin{cases}
x^2 z - x y^2 - y w^2 - z^2 w,\\
x w - y z
\end{cases}
$\\
& Diagonal Form: $x w - y z$, lies on a ruled surface over $\Q$
\\ \hline
\end{longtable}
\end{center}

\bibliographystyle{alpha}

\noindent{Tarun Dalal}\\
 {Department of Mathematics, Harish-Chandra Research Institute, HBNI\\ Chhatnag Road, Jhunsi, Prayagraj 211019, India}\\
 {tarun.dalal80@gmail.com}

 \end{document}